\providecommand{\Div}{\operatorname{div}}          
\providecommand{\Det}{\operatorname{det}}                    
\providecommand{\argmin}{\operatorname*{argmin}}  
\providecommand{\Id}{\Op{Id}}                     
\providecommand{\limsup}{\operatorname*{limsup}}
\newcommand{\VR}{{\mathbf{R}}}
\newcommand{\Dsf}{\mathsf{D}}
\newcommand*{\Op}[1]{\mathsf{#1}} 
\newcommand{\overbar}[1]{\mkern 1.5mu\overline{\mkern-1.5mu#1\mkern-1.5mu}\mkern 1.5mu}
\newcommand{\beqn}{\begin{eqnarray*}}
\newcommand{\eeqn}{\end{eqnarray*}}
\newcommand{\ben}{\begin{equation}}
\newcommand{\een}{\end{equation}}
\newcommand{\beq}{\begin{eqnarray}}
\newcommand{\eeq}{\end{eqnarray}}
\newcommand{\benn}{\begin{equation*}}
\newcommand{\eenn}{\end{equation*}}
\newtheorem{assumption}[theorem]{Assumption}
\newtheorem*{assumption*}{Assumption}
\title{First-order differentiability properties of a class of equality constrained optimal value functions with applications}
\author{Kevin Sturm%
\thanks{Institut für Analysis und Scientific Computing, TU Wien, Wiedner Hauptstraße 8--10,
1040 Wien, Austria (\email{kevin.sturm@asc.tuwien.ac.at})}}
\begin{document}

\maketitle

\begin{abstract}
    In this paper we study the right differentiability of a parametric infimum function over a parametric set defined by equality constraints. We present a new theorem with sufficient conditions for the right differentiability with respect to the parameter. Target applications are nonconvex objective functions with equality constraints arising in optimal control and shape optimisation. The theorem makes use of the averaged adjoint approach in conjunction with the variational approach of Kunisch, Ito and Peichl. We provide two examples of our abstract result: (a) a shape optimisation problem involving a semilinear partial differential equation which exhibits infinitely many solutions, (b) a finite dimensional quadratic function subject to a nonlinear equation.
\end{abstract}


\section{Introduction}
Let a normed space $X$, a vector space $Y$ and $\tau >0$ be given. In this paper we study the one-sided differentiability in $t=0^+$ of the optimal value-function
\ben\label{E:gt}
t\mapsto g(t):= \inf_{u  \in E(t)} f(t,u ),
\een
where $f:[0,\tau]\times X\to \VR$ is a given function. The $E(t)$ denotes the set of states given by
\ben
E(t) = \{u^t\in X: e(t,u^t,\varphi)=0 \quad \text{ for all } \varphi \in Y\},
\een
where $e:[0,\tau]\times X\times Y \to \VR$ is a function that is linear with respect to the
last argument. The Lagrangian $(t,u,p) \mapsto  G(t,u,p): [0, \tau ] \times X \times Y \to \VR$ associated with \eqref{E:gt} is defined by
\ben
G(t,u,q) = f(t,u) + e(t,u,q).
\een
With this Lagrangian the set $E(t)$ can be expressed in terms of the Lagrangian $G$ as follows
\ben
E(t) = \{u^t\in X: \partial_p G(t,u^t,0)(\varphi)=0\; \text{ for all } \varphi\in Y\}
\een
and $g$ can be written as a minimax (see \cite{a_DEST_2017a})
\ben
g(t) = \inf_{\varphi\in X}\sup_{\psi\in Y} G(t,\varphi,\psi) = \inf_{\varphi\in E(t)} G(t,\varphi,0).
\een
We will provide new conditions (see Hypothesis (H3)) under which the function $g$ is right differentiable. The pertinence of the result is illustrated by applying it to a finite dimensional problem and
a shape optimisation  problem.

The problem of finding the right derivative of \eqref{E:gt} arises naturally when deriving optimality conditions appearing in equality constrained finite and infinite dimensional control and shape optimisation problems. Accordingly it has been studied by many authors before and sufficient conditions, even with inequality constraints are known; see, e.g., the review article \cite{a_BOSH_1998a}. Often for inequality constrained problems suitable \emph{constraint qualifications}  (e.g. Robinson's constraint qualification \cite{a_RO_1980a})  are required which impose a certain regularity on minimisers; see \cite{a_IZSO_2001a,a_ZOKU_1979a,a_KU_1976a}. In \cite{a_LEMA_1980a} the right differentiability is examined in infinite dimensions under the assumption that the elements of $E(t)$ arise from convex optimisation problems; see also \cite{a_SH_1995a} and \cite{a_BOCO_1996a,a_BOCO_1996b} for results in infinite dimension.

In case $E(t)$ is independent of $t$ let us mention the early work of J.~M.~Danskin \cite{b_DA_1967a,a_DA_1966a} where a maximum function with respect to  a  parameter was studied. When the solution of the maximum problem (and similarly minimum problem) is not unique, then a natural non-differentiability arises. In this case only directional derivatives or sub-differentials are computable. We also refer to the monographs \cite{b_DEMALO_2014a,b_GRTE_2001a,b_RI_1998a} and references therein. In the review article \cite{a_BOSH_1998a} and also the book chapters \cite[Chap. 4]{b_BOSH_2000a} and \cite[Chap. 2]{b_ITKU_2008a} several conditions for right differentiability of $g$ are given (see also references therein). In particular first and second order expansions of value functions are studied using second order conditions.

As mentioned before second order analysis can be used to obtain differentiability of the optimal
solution $u^t$ and hence differentiability of the value function $g$. Let us mention \cite{a_GUKU_2018a} where the differentiability of the value function with respect to Dirichlet data of a tracking-type cost function constrained by a semilinear parabolic PDE is studied. A key ingredient is a H\"older estimate of order  $1/2$  of the optimal control with respect to the Dirichlet data.

The differentiability of parametric minimax functions under saddle point assumptions has been studied in \cite{a_COSE_1985a} by Correa and Seeger and was subsequently extended and applied to shape optimisation problems by Delfour and Zol\'esio in \cite{a_DEZO_1989a}.
For nonlinear equality constraints this saddle point assumption is unfortunately often not satisfied.

In \cite{a_ST_2015a, phd_ST_2014a} an approach to the differentiability of a minimax without a saddle point assumption for the Lagrangian  was presented. An extension to the multivalued case can be found in \cite[Thm. 4.1]{a_DEST_2017a} and \cite[Thm. 2 and Thm. 3]{c_DEST_2016a}. In addition in \cite[Thm. 3.1]{a_DEST_2017a} and \cite[Thm. 1]{c_DEST_2016a} also the singleton case was revisited and extended by introducing an extra term. For applications to the single valued case of this approach we refer to \cite{a_ST_2015a} and also \cite{a_KAKUST_2018a} and \cite{a_ST_2016a}. In this context let us also mention the approaches of \cite[p.54, Thm. 4.6]{phd_ST_2014a} and  \cite[Thm. 3.3]{c_DE_2018b} (see also \cite{c_DE_2018a}), where a Lagrangian approach using an unperturbed adjoint variable is proposed for the single-valued case. The adjoint method \cite[Thm. 3.3]{c_DE_2018b} has also been recently used in \cite{a_GAST_2019a.13420v2} and \cite{a_GAST_2019b.10775v2} to compute topological derivatives and, in addition, also a thorough  comparison with the averaged adjoint method is provided. From this it appears, at least in the context of computing topological derivatives, that the averaged adjoint method seems favorable, since a larger class of cost functionals could be treated.  This may be rooted in the infinite dimensionality of the problem.

The key idea of the averaged adjoint approach is to replace the perturbed standard adjoint by the so-called \emph{averaged adjoint state equation}.  This allows to deal with
non-convex objective functions and non-linear state equations. Let us also mention the variational approach of
\cite{a_ITKUPE_2006a}  where another approach is proposed to show the differentiability of a minimax by using some sort of second order expansion. Both approaches have in common that they bypass the computation of the derivative of the control-to-state operator. Although both approaches are from its nature very different we will show in this paper how
they can effectively be combined to establish yet another even more powerful new theorem on the  differentiability of the minimax.

Our result gives new easy to check conditions and generalises results in
\cite{a_DEST_2017a}. The target applications of our theorem are the \textit{shape sensitivity analysis} yet the result can also be applied to optimal control problems in general.

\subsection*{Notation}

Let $f:[0,\tau]\times  \mathcal U_1\times \mathcal U_2  \rightarrow \VR$ be a function defined on the Cartesian product of the interval $[0,\tau]$, $\tau >0$ and the open subsets  $\mathcal U_1\subset X$  and  $\mathcal U_2\subset Y$ of normed spaces. Then we define
for $(t,u,p)\in [0,\tau)\times \mathcal U_1\times\mathcal U_2$ and $v\in  X$ and $w\in Y$ the following one sided directional derivatives
\ben
\begin{split}
    \partial_tf(t,u,p)&:= \lim_{h \searrow 0} \frac{f(t+h,u,p)-f(t,u,p)}{h},\\
    \partial_{u} f(t,u,p)(v)&:= \lim_{ h  \searrow 0} \frac{f(t, u +hv,p )-f(t,u,p)}{h},\\
    \partial_{p} f(t,u,p)(w)&:= \lim_{ h \searrow 0 } \frac{f(t,u,p+h w)-f(t,u,p)}{h},
\end{split}
\een
provided the limits on the right hand side exist, respectively.  The notation $h\searrow 0$ indicates that $h\rightarrow 0$ under the condition $h>0$.

We equip $\VR^d$ with the Euclidean norm $\|\cdot\|$ and denote by $\|A\|$
the corresponding operator norm for $A\in \VR^{d\times d}$.

Throughout the paper, we will use the terminology \emph{state equation} and \emph{adjoint state equation}.

\section{Minimax theorem via the averaged adjoint equation}

\subsection{Averaged adjoint equation}
Let $X,Y$ and $G$ be as in the introduction. We will henceforth assume
that $g(t)$ is finite for all $t\in [0,\tau]$.

\begin{definition}
    We introduce for $t \in [0,\tau]$ the set of minimisers
    \ben
    X(t) := \{u^t \in E(t): \; \inf_{u\in E(t)} f(t,u) = f(t,u^t) \}.
    \een
\end{definition}
Notice that $X(t)\subset E(t)$ and that $X(t)=E(t)$ whenever $E(t)$ is a singleton. However, in general $X(t)$ and $E(t)$ do not need to coincide.  The definition of the averaged adjoint equation requires that the set of states is not empty:
\begin{assumption*}[H0]
    For all $t\in [0,\tau]$ we have $X(t)\not= \emptyset$.
\end{assumption*}
Before we can introduce the averaged adjoint equation we need the following hypothesis.
\begin{assumption*}[H1]
    For all $t \in [0,\tau]$ and $(u^0, u^t) \in X(0)\times E(t)$ we assume:
    \begin{enumerate}
        \item [\textup{(i)}]
            For all $p \in Y$, the mapping $s\mapsto G(t , su^t + (1-s)u^0), p):[0,1]\to \VR$ is absolutely continuous.
        \item [\textup{(ii)}]
            For all $(\varphi,q)\in X\times Y$ and almost all $s\in (0,1)$
            the function
            \ben
            s \mapsto \partial_u G(t, su^t + (1-s)u^0,p)(\varphi):[0,1]\to \VR
            \een
            is well-defined and belongs to $L_1(0,1)$.
    \end{enumerate}
\end{assumption*}
\begin{remark}\label{rem:crucial}
    Notice that item (i) implies that for all $t \in [0,\tau]$,  $(u^0, u^t)\in X(0)\times E(t)$  and $p\in Y$,
    \ben
    \label{eq:mean_value_affine}
    G(t,u^t,p) = G(t,u^0,p) +\int_0^1  \partial_u G(t, su^t + (1-s)u^0,p)(u^t-u^0) \, ds.
    \een
    This follows at once by applying the fundamental theorem of calculus to $s\mapsto G(t, su^t + (1-s)u^0, p)$ on $[0,1]$.
\end{remark}

The following gives the definition of the adjoint and averaged adjoint equation; see \cite{a_ST_2015a}.
\begin{definition}[Averaged adjoint equation]
    Let $\tilde X\subset X$ be a linear subspace. Given $t\in [0,\tau]$ and $(u^0,u^t)\in E(0)\times E(t)$, the
    \emph{averaged adjoint state equation} is defined as follows: find $q^t \in Y$, such that
    \ben\label{eq:AAE}
    \int_0^1 \!\!\partial_u G(t,su^t + (1-s)u^0 ,q^t)(\varphi)\, ds=0 \quad \text{ for all } \varphi\in \tilde X.
    \een
    For every triplet $(t, u^0,u^t)$ the set of solutions to \eqref{eq:AAE} is denoted by $Y(t,u^0,u^t)$.
\end{definition}
\begin{definition}
    The standard adjoint $p^t\in X$ is defined by  $\partial_u G(t,u^t,p^t)(\varphi)=0$ for all
    $\varphi\in \tilde X$ and the set of adjoints associated with $(t,u^t)$ is denoted $Y(t,u^t)$.
\end{definition}
Notice that $ Y(0, u^0) = Y(0,u^0,u^0)$ for all $u^0\in E(0)$, that is, the averaged adjoint equation
reduces to the usual adjoint equation. The averaged adjoint equation allows us to express the Lagrangian at time $t$ solely through the Lagrangian evaluated at  $(t,u^0,q^t)$.

\begin{lemma}\label{lem:reparam_lagrange}
    Let  $t\in (0,\tau]$. Then for all $(u^0,u^t)\in E(0)\times E(t)$ with $u^t-u^0\in \tilde X$, and $q^t\in Y(t,u^0,u^t)$, we have
    \ben
    \label{eq.simple2}
    G(t,u^t,q^t)  = G(t,u^0,q^t).
    \een
\end{lemma}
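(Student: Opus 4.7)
The plan is to combine Remark 2 (the fundamental theorem of calculus applied to the Lagrangian along the segment from $u^0$ to $u^t$) with the defining equation of the averaged adjoint state. Concretely, I would proceed in three short steps.

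First, I invoke hypothesis (H1)(i), which, as noted in Remark \ref{rem:crucial}, gives the identity
\begin{equation*}
G(t,u^t,p) \;=\; G(t,u^0,p) + \int_0^1 \partial_u G\bigl(t,\, su^t+(1-s)u^0,\, p\bigr)(u^t-u^0)\,ds
\end{equation*}
for every $p\in Y$ and every pair $(u^0,u^t)\in X(0)\times E(t)$. The statement of the lemma takes $(u^0,u^t)\in E(0)\times E(t)$, but the argument uses (H1)(i) applied with this pair, so the identity above still holds in the situation of the lemma (and in any case this formula is the only ingredient from Remark \ref{rem:crucial} that I need). I then specialise it to $p=q^t$.

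Second, I observe that the hypothesis $u^t-u^0\in\tilde X$ means that $u^t-u^0$ is an admissible test function in the averaged adjoint equation \eqref{eq:AAE}. Inserting $\varphi = u^t-u^0$ into that equation gives
\begin{equation*}
\int_0^1 \partial_u G\bigl(t,\, su^t+(1-s)u^0,\, q^t\bigr)(u^t-u^0)\,ds \;=\; 0.
\end{equation*}
Here (H1)(ii) guarantees that the integrand is well-defined and integrable, so the identity makes sense as stated.

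Third, substituting this vanishing integral into the mean value formula with $p=q^t$ yields directly
\begin{equation*}
G(t,u^t,q^t) \;=\; G(t,u^0,q^t),
\end{equation*}
which is \eqref{eq.simple2}. There is no real obstacle in this argument; the only point to watch is that the test function $u^t-u^0$ indeed belongs to $\tilde X$, which is exactly what is assumed. The role of this lemma, I expect, will be to allow replacing $G(t,u^t,q^t)$ by $G(t,u^0,q^t)$ in subsequent difference quotients $(g(t)-g(0))/t$, thereby eliminating the perturbed state $u^t$ from the expansion and enabling the differentiability analysis of $g$.
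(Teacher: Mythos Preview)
Your proposal is correct and matches the paper's own proof essentially verbatim: apply the mean value identity from Remark~\ref{rem:crucial} with $p=q^t$, then use $u^t-u^0\in\tilde X$ as a test function in the averaged adjoint equation~\eqref{eq:AAE} to kill the integral term. Your observation about the slight mismatch between $E(0)$ in the lemma and $X(0)$ in (H1) is a fair point of care, but as you note it does not affect the argument.
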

\begin{proof}
    This follows directly from \cref{rem:crucial} noting that $u^t-u^0\in \tilde X$ is an admissible test function in \eqref{eq:AAE} and hence the last term in \eqref{eq:mean_value_affine} vanishes.
\end{proof}

\begin{remark}
    Notice that \eqref{eq.simple2} holds for all $t>0$, but not necessarily at $t=0$. The reason behind
    this is a discontinuity at $t=0$. Let $u^t\in E(t)$ and $\bar u^0\in E(0)$ with $u^0\ne \bar u^0$ and let $q^t\in  Y(t,\bar u^0, u^t)$. Set
    $f_1(t):= G(t,u^t,q^t)$ and $f_2(t):= G(t,\bar u^0,q^t)$ for $t\in [0,\tau]$. Then from \eqref{eq.simple2} we obtain
    \ben
    f_1(t) = f_2(t) \quad \text{ for all } t>0,
    \een
    but $f_1$ and $f_2$ do not coincide at $t=0$ unless $f_1(0) = f(0,u^0)=f(0,\bar u^0) = f_2(0)$. However, if we also let $\bar u^0_t\in E(0)$, such that $\bar u^0_0=u^0$, then the functions
    $f_1(t):=G(t,u^t,q^t)$ and $f_2(t):=G(t,\bar u^0_t,q^t)$ will coincide at $t=0$. This observation is important for our main theorem (\cref{thm.one}); see also Hypothesis~(H3).
\end{remark}

\begin{corollary}\label{cor:g_inf}
    For all $t\in (0,\tau]$, $(u^0,u^t)\in E(0)\times X(t)$ with $u^t-u^0\in \tilde X$, and for all $ q^t\in Y(t,u^0,u^t)$, we have
    \ben\label{eq:g_Gt}
    g(t) = G(t,u^t,q^t) = G(t,u^0,q^t).
    \een
\end{corollary}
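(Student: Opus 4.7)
The plan is to combine the definition of the minimiser set $X(t)$, the constraint $u^t\in E(t)$, and the identity already established in \cref{lem:reparam_lagrange}.

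First I would compute $G(t,u^t,q^t)$ directly. Since $u^t\in X(t)\subset E(t)$, the definition of $E(t)$ gives $e(t,u^t,\varphi)=0$ for every $\varphi\in Y$; taking $\varphi=q^t$ yields $e(t,u^t,q^t)=0$. Therefore
\[
G(t,u^t,q^t)=f(t,u^t)+e(t,u^t,q^t)=f(t,u^t).
\]
On the other hand, because $u^t\in X(t)$, the defining property of $X(t)$ gives $f(t,u^t)=\inf_{u\in E(t)}f(t,u)=g(t)$. Combining the two equalities produces the first identity $g(t)=G(t,u^t,q^t)$.

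Next I would invoke \cref{lem:reparam_lagrange}. Its hypotheses are exactly those of the corollary: $t\in(0,\tau]$, $(u^0,u^t)\in E(0)\times E(t)$ with $u^t-u^0\in\tilde X$, and $q^t\in Y(t,u^0,u^t)$. The lemma then yields $G(t,u^t,q^t)=G(t,u^0,q^t)$, which together with the previous step gives the chain
\[
g(t)=G(t,u^t,q^t)=G(t,u^0,q^t),
\]
as claimed.

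There is no real obstacle here; the statement is essentially a bookkeeping consequence of \cref{lem:reparam_lagrange} combined with the observation that the constraint term in $G$ vanishes on admissible states. The only point worth checking carefully is that $X(t)\subset E(t)$ (so that $e(t,u^t,\cdot)=0$ is available) and that the hypotheses of \cref{lem:reparam_lagrange} are inherited from those of the corollary, which is immediate since $X(t)\subset E(t)$ and the remaining requirements are assumed verbatim.
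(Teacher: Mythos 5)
Your proof is correct and follows essentially the same route as the paper's: both use that $e(t,u^t,\cdot)$ vanishes for $u^t\in X(t)\subset E(t)$ to identify $G(t,u^t,q^t)$ with $f(t,u^t)=g(t)$, and both then invoke \cref{lem:reparam_lagrange} for the second equality. The only cosmetic difference is that the paper routes the first identity through $G(t,u^t,0)$ before replacing $0$ by $q^t$, whereas you evaluate $G(t,u^t,q^t)$ directly.
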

\begin{proof}
    Let $t\in (0,\tau]$, $(u^0,u^t)\in E(0)\times X(t)$, and $ q^t\in Y(t,u^0,u^t)$ be given. Since $u^t\in X(t)$ we obtain by definition
    \ben\label{eq:cor1}
    g(t) = \inf_{u\in E(t)} G(t,u,0) = G(t,u^t,0).
    \een
    On the other hand since $X(t)\subset E(t)$, we have $u^t \in E(t)$ and thus
    $G(t,u^t,0) = G(t,u^t,q^t).$ Now we can apply \cref{lem:reparam_lagrange} to obtain
    \ben\label{eq:cor2}
    G(t,u^t,q^t) = G(t,u^0,q^t).
    \een
    Finally \eqref{eq:cor1} and \eqref{eq:cor2} together imply \eqref{eq:g_Gt}.
\end{proof}

\begin{remark}
    Notice that in our setting the test space of the adjoint and averaged adjoint equation might be smaller than the space of definition of the
    parametrised Lagrangian, that is, $\tilde X\ne X$ in general. This is for instance the case when solving the Dirichlet Laplacian where the test space would be $H^1_0$ and the trial space $H^1$. We refer to the last section for an example.
\end{remark}

\begin{remark}\label{rem:help}
    Let $G:[0,\tau]\times X \times Y \rightarrow \VR$ be a Lagrangian and $u\in X$. Assume  that $\partial_t G(0,u,p)$ exists for all $p\in  Y$. Then $p\mapsto \partial_t G(0,u,p):Y \rightarrow \VR$ is affine.
\end{remark}

\subsection{A new minimax theorem for Lagrangians}
The next theorem gives new sufficient conditions for $g$ to be right differentiable at $t=0$. Our theorem extends \cite{a_COSE_1985a}
and complements \cite[Theorem 4.1]{a_DEST_2017a} for functions $G$ that are Lagrangians. We will give a new theorem which provides new sufficient conditions for which the limit
\ben
dg(0):= \lim_{t\searrow 0}\frac{g(t)-g(0)}{t}
\een
exists, where $g$ is given by \eqref{E:gt}.

\begin{theorem}
    \label{thm.one}
    Let $G$ be a Lagrangian and suppose that Hypotheses \textup{(H0)--(H1)} and the following conditions are satisfied.
    \begin{enumerate}
        \item[\textup{(H2)}]
            For all  $u\in X(0) $  and all $p\in Y(0,u)$,
            $\partial_t G(0,u,p)$ exists;

        \item[\textup{(H3)}]    For every null-sequence $(t_n)$, $t_n\in (0,\tau]$, there exist $u^0\in X(0)$ and $p^0\in Y(0,u^0)$, a subsequence $(t_{n_k})$, elements   $(u^0_{t_{n_k}}, u^{t_{n_k}})\in E(0) \times X(t_{n_k})$,  $u^{t_{n_k}}-u^0_{t_{n_k}}\in \tilde X$ and $q^{t_{n_k}}\in Y(t_{n_k},u^0_{t_{n_k}},u^{t_{n_k}})$, such that
            \[
                \liminf_{
                    k\to \infty} \frac{G(t_{n_k},u^0_{t_{n_k}},q^{t_{n_k}})
                -G(0,u^0_{t_{n_k}},q^{t_{n_k}})}{t_{n_k}} \ge \partial_t G(0, u^0, p^0).
            \]

        \item [\textup{(H4)}]
            For every $ u^0\in X(0)$ there is  a  suboptimal path  $ t\mapsto \bar u^t:[0,\tau]\rightarrow X $  satisfying, $\bar u^0=u^0$,  $\bar u^t-u^0\in \tilde X$, $\bar u^t\in E(t)$ and
            \[ \lim_{t\searrow 0}\frac{\|\bar u^t -u^0\|_{X}}{t^{1/2}} = 0 \]
            and for all $p^0\in Y(0,u^0)$,
            \ben\label{eq:hoelder_ubar}
            \limsup_{t\searrow 0}\frac{G(t,\bar  u^t,p^0)-G(0,\bar u^t,p^0)}{t} \le  \partial_t G(0,u^0,p^0).
            \een

        \item [\textup{(H5)}] For every $u^0\in X(0)$ and every $p^0\in Y(0,u^0)$,
            \ben
            |G(0,u,p^0)-G(0,u^0,p^0) - \partial_{u} G(0,u^0,p^0)(u-u^0)| = \mathcal O (\|u-u^0\|^2_{X}).
            \een
    \end{enumerate}
    Then the one sided derivative $dg(0)$ exists and we find $u^0\in X(0)$ and $p^0\in Y(0,u^0)$, such that
    \ben\label{eq:thm1_dg}
    dg(0)  = \partial_t G(0,u^0,p^0)
    \een
    and we have the bound
    \ben\label{eq:thm1_dg_bound}
    \inf_{u\in X(0)}\inf_{p\in Y(0,u)} \partial_tG(0,u,p)       \le  dg(0)  \le \inf_{u\in X(0)}\sup_{p\in Y(0,u)} \partial_tG(0,u,p).
    \een
    If, in addition, for all $u\in X(0)$ the set $Y(0,u)=\{p^0(u)\}$ is a singleton, then
    \ben\label{eq:thm1_dg2}
    dg(0)  =  \inf_{u\in X(0)}  \partial_t  G(0, u, p^0(u)) = \partial_t G(0,u^0,p^0).
    \een

\end{theorem}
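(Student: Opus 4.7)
The plan is to bracket the difference quotient $(g(t)-g(0))/t$ from above via the suboptimal path in \textup{(H4)} and from below, along a subsequence, via the averaged-adjoint identity of \cref{cor:g_inf} combined with \textup{(H3)}, then force the two bounds to coincide by insisting that \emph{the same pair} $(u^0,p^0)$ appear on both sides. Throughout, \textup{(H0)} guarantees $X(0)\neq\emptyset$ so that such a pair exists, and \textup{(H2)} ensures that $\partial_tG(0,u,p)$ is meaningful.

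\emph{Upper bound.} Fix $u^0\in X(0)$, $p^0\in Y(0,u^0)$, and take the suboptimal path $\bar u^t$ provided by \textup{(H4)}. Since $\bar u^t\in E(t)$ forces $e(t,\bar u^t,p^0)=0$, one has $g(t)\le f(t,\bar u^t)=G(t,\bar u^t,p^0)$, while $\bar u^0=u^0\in X(0)$ gives $g(0)=G(0,u^0,p^0)=G(0,\bar u^0,p^0)$. Split
\[
\frac{g(t)-g(0)}{t}\le\frac{G(t,\bar u^t,p^0)-G(0,\bar u^t,p^0)}{t}+\frac{G(0,\bar u^t,p^0)-G(0,u^0,p^0)}{t}.
\]
The $\limsup$ of the first summand is bounded by $\partial_tG(0,u^0,p^0)$ by \textup{(H4)}. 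For the second, \textup{(H5)} yields a quadratic Taylor bound; its linear piece $\partial_uG(0,u^0,p^0)(\bar u^t-u^0)$ vanishes because $p^0$ solves the adjoint equation and $\bar u^t-u^0\in\tilde X$, so the remainder is $\mathcal O(\|\bar u^t-u^0\|_X^2/t)=o(1)$ thanks to the H\"older-type bound $\|\bar u^t-u^0\|_X=o(t^{1/2})$ in \textup{(H4)}. This gives $\limsup_{t\searrow 0}(g(t)-g(0))/t\le\partial_tG(0,u^0,p^0)$ for every admissible pair $(u^0,p^0)$.

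\emph{Lower bound.} Pick a null-sequence $(t_n)$ with $(g(t_n)-g(0))/t_n\to\liminf_{t\searrow 0}(g(t)-g(0))/t$. Apply \textup{(H3)} to extract a subsequence $(t_{n_k})$ and obtain $u^0\in X(0)$, $p^0\in Y(0,u^0)$, $u^0_{t_{n_k}}\in E(0)$, $u^{t_{n_k}}\in X(t_{n_k})$ with $u^{t_{n_k}}-u^0_{t_{n_k}}\in\tilde X$, and $q^{t_{n_k}}\in Y(t_{n_k},u^0_{t_{n_k}},u^{t_{n_k}})$. By \cref{cor:g_inf}, $g(t_{n_k})=G(t_{n_k},u^0_{t_{n_k}},q^{t_{n_k}})$, while $u^0_{t_{n_k}}\in E(0)$ yields $g(0)\le f(0,u^0_{t_{n_k}})=G(0,u^0_{t_{n_k}},q^{t_{n_k}})$. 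Hence
\[
\frac{g(t_{n_k})-g(0)}{t_{n_k}}\ge\frac{G(t_{n_k},u^0_{t_{n_k}},q^{t_{n_k}})-G(0,u^0_{t_{n_k}},q^{t_{n_k}})}{t_{n_k}},
\]
whose $\liminf$ is at least $\partial_tG(0,u^0,p^0)$ by \textup{(H3)}. Since the difference quotients along $(t_n)$ converge to $\liminf_{t\searrow 0}(g(t)-g(0))/t$, the subsequence converges to the same value, so $\liminf_{t\searrow 0}(g(t)-g(0))/t\ge\partial_tG(0,u^0,p^0)$.

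\emph{Conclusion and bounds.} Applying the upper-bound inequality to the \emph{same} pair $(u^0,p^0)$ produced by \textup{(H3)} gives $\limsup\le\partial_tG(0,u^0,p^0)\le\liminf$; the limit $dg(0)$ therefore exists and equals $\partial_tG(0,u^0,p^0)$, establishing \eqref{eq:thm1_dg}. The lower inequality in \eqref{eq:thm1_dg_bound} is then immediate since $(u^0,p^0)\in X(0)\times Y(0,u^0)$, and the upper inequality follows from the pointwise bound $dg(0)\le\partial_tG(0,u,p)$ valid for every $(u,p)$: take $\sup$ over $p\in Y(0,u)$ (the bound certainly persists) and then $\inf$ over $u\in X(0)$. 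When $Y(0,u)=\{p^0(u)\}$ is always a singleton, the two sides of \eqref{eq:thm1_dg_bound} collapse, yielding \eqref{eq:thm1_dg2}. I expect the delicate step to be the vanishing of the linear Taylor term in the upper-bound argument: it simultaneously exploits the adjoint identity $\partial_uG(0,u^0,p^0)|_{\tilde X}=0$, the inclusion $\bar u^t-u^0\in\tilde X$, and the $o(t^{1/2})$ estimate on $\|\bar u^t-u^0\|_X$, which is exactly why all three ingredients are bundled together into \textup{(H4)} and \textup{(H5)}.
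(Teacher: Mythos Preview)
Your proof is correct and follows essentially the same route as the paper: the paper likewise splits the argument into an upper bound (its \cref{lem:upper_bound}, using the suboptimal path from \textup{(H4)} together with \textup{(H5)} and the adjoint identity to kill the linear Taylor term) and a lower bound (its \cref{lem:lower_bound}, using \cref{cor:g_inf} and \textup{(H3)} along a sequence realising the $\liminf$), and then combines them by applying the universal upper bound to the particular pair $(u^0,p^0)$ produced by \textup{(H3)}. Your derivation of the right-hand inequality in \eqref{eq:thm1_dg_bound} via ``$\sup$ over $p$ then $\inf$ over $u$'' is slightly roundabout---since $dg(0)\le\partial_tG(0,u,p)$ holds for \emph{every} $p\in Y(0,u)$ you could even conclude the stronger $dg(0)\le\inf_u\inf_p\partial_tG(0,u,p)$---but the stated bound is of course a consequence.
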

Before we turn our attention to the proof of this theorem let us make a few remarks.

\begin{remark}
    Let us give a guideline on how Hypothesis (H3) can be verified in practice. Let a null-sequence $(t_n)$ and  $(u^{t_n})\in X(t_n)$ be given. Typically one can use compactness arguments to find $u^0\in X(0)$ and a subsequence  (denoted the same)  such that $u^{t_n} \to u^0$ in some topology on $X$ (e.g. weak or strong). Then
    one constructs $u^{t_n}_0\in E(0)$, such that $u^{t_n}_0\to u^0$ and $Y(t_n,u^{t_n}_0,u^{t_n})\ne \emptyset$. Then it only remains to verify that there is a sequence $q^{t_n}\in Y(t_n,u^{t_n}_0,u^{t_n}) $ of averaged adjoints that converges to some element $q^0\in Y(0,u^0)$.

\end{remark}

\begin{remark}
    \begin{itemize}
        \item In contrast to previous theorems (see, e.g., \cite{a_DEST_2017a,c_DEST_2016a}) we allow the points $u^0_{t}\in E(0)$ to change when $t$ approaches zero.  The idea is to choose
            $u_0^t$ in such a way that the averaged adjoint variable
            $q^t\in Y(t,u_t^0,u^t)$ exists. We will illustrate this with a nonconvex example in \cref{sec:finite}.
        \item  Assumption (H3) extends Hypothesis (H3) of \cite[Thm. 4.1]{a_DEST_2017a} by perturbing the elements $u_0\in X(0)$. This allows us to treat examples where the original averaged adjoint variable is not well-defined; see  \cref{subsec:counter}.
        \item  Assumptions (H4) and (H5) in \cref{thm.one} follow ideas used in \cite{a_ITKUPE_2006a} (see also their follow up work \cite{a_KAKU_2014a,a_KAKU_2011a}) and replace Hypothesis~(H4) of \cite[Thm. 4.1]{a_DEST_2017a}. The main motivation and advantage  is that  we do not need to assume that we find for all $u^0\in X(0)$ a continuous path $[0,\tau]\rightarrow \VR^d:t\mapsto u^t$ with $u^t\in X(t)$, which might be difficult to check or might be even  false (see the example in \cref{subsec:counter}). We also refer to \cite{a_ITKUPE_2006a} for an example
            where $t\mapsto u^t$ is in fact not differentiable, but where (H4),(H5) are satisfied.  We also note that the other recent articles such as  \cite[Thm. 4.1.]{a_DEST_2017a}, \cite[Thm. 4.1.]{c_DE_2018b},\cite[Thm. 6.1 and Thm. 6.2]{c_DE_2018a} always work with optimal paths. Therefore the replacement of this condition with the present one is crucial.
        \item Let us mention other results related to ours. In \cite[Thm. 4.4]{a_BOSH_1998a} the differentiability of $t\mapsto g(t)$ is proved under the assumption that the minimisation problem \eqref{E:gt} is convex and that there is an $o(t)$ optimal path $u^t$, such that $\|u^t-u^0\|_X = o(t)$. This latter condition is similar to condition (H3), however, we only require the existence of $u^t\in E(t)$ with $1/2$ H\"older continuity. However, the result \cite[Thm. 4.4]{a_BOSH_1998a} also includes inequality constraints; see also \cite{a_SH_1995a} and \cite{a_BOCO_1996a,a_BOCO_1996b}.
    \end{itemize}
\end{remark}

We split the proof of this theorem in  two lemmas  in which we prove upper and lower bounds for
the following liminf and limsup of the differential quotients of $g$:
\begin{gather}
    \underline{dg}(0) := \liminf_{t\searrow 0} \frac{g(t)-g(0) }t
    \quad \text{and}\quad
    \overline{dg}(0) := \limsup_{t\searrow 0} \frac{g(t)-g(0) }t.
\end{gather}

\begin{lemma}\label{lem:lower_bound}
    Assume that $G$ satisfies Hypotheses \textup{(H0)--(H3)}. Then
    \ben\label{eq:concl_1}
    \exists u^0\in X(0) , \;\exists p^0\in Y(0,u^0), \quad \underline{dg}(0) \ge  \partial_t  G(0,u^0,p^0).
    \een
    In particular, we have
    \ben\label{eq:lower_bound}
    \underline{dg}(0) \ge \inf_{u\in X(0)}\inf_{q\in Y(0,u)} \partial_t G(0,u,q).
    \een
\end{lemma}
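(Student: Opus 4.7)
The plan is to bound the lower differential quotient from below by directly invoking Hypothesis (H3) after selecting a suitable sequence and replacing $g(t_n)$ and $g(0)$ by Lagrangian evaluations that can be compared.

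First, I would pick a null-sequence $(t_n) \subset (0,\tau]$ that realises the liminf, so that
\[
\underline{dg}(0) = \lim_{n\to\infty} \frac{g(t_n)-g(0)}{t_n}.
\]
Applying Hypothesis (H3) to this sequence produces a subsequence $(t_{n_k})$, elements $u^0 \in X(0)$, $p^0 \in Y(0,u^0)$, $(u^0_{t_{n_k}}, u^{t_{n_k}}) \in E(0)\times X(t_{n_k})$ with $u^{t_{n_k}}-u^0_{t_{n_k}} \in \tilde X$, and averaged adjoints $q^{t_{n_k}} \in Y(t_{n_k},u^0_{t_{n_k}},u^{t_{n_k}})$, such that
\[
\liminf_{k\to\infty}\frac{G(t_{n_k},u^0_{t_{n_k}},q^{t_{n_k}})-G(0,u^0_{t_{n_k}},q^{t_{n_k}})}{t_{n_k}} \ge \partial_t G(0,u^0,p^0).
\]

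The next step is to rewrite the numerator $g(t_{n_k})-g(0)$ in a comparable form. Since $u^{t_{n_k}} \in X(t_{n_k})$, $u^0_{t_{n_k}} \in E(0)$ and $u^{t_{n_k}}-u^0_{t_{n_k}} \in \tilde X$, \cref{cor:g_inf} gives $g(t_{n_k}) = G(t_{n_k},u^0_{t_{n_k}},q^{t_{n_k}})$. Moreover, since $u^0_{t_{n_k}} \in E(0)$, we have $e(0,u^0_{t_{n_k}},q^{t_{n_k}})=0$ by the linearity of $e$ in its last argument, so $G(0,u^0_{t_{n_k}},q^{t_{n_k}}) = f(0,u^0_{t_{n_k}})$, and by definition of $g(0)$ as an infimum over $E(0)$,
\[
g(0) \le f(0,u^0_{t_{n_k}}) = G(0,u^0_{t_{n_k}},q^{t_{n_k}}).
\]
Combining these two facts yields the crucial pointwise inequality
\[
g(t_{n_k}) - g(0) \ge G(t_{n_k},u^0_{t_{n_k}},q^{t_{n_k}}) - G(0,u^0_{t_{n_k}},q^{t_{n_k}}).
\]

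Dividing by $t_{n_k}>0$, passing to the liminf and applying (H3) gives
\[
\underline{dg}(0) = \lim_{k\to\infty}\frac{g(t_{n_k})-g(0)}{t_{n_k}} \ge \partial_t G(0,u^0,p^0),
\]
which is \eqref{eq:concl_1}. The bound \eqref{eq:lower_bound} then follows immediately because $u^0 \in X(0)$ and $p^0 \in Y(0,u^0)$, so the right-hand side is bounded below by the double infimum. I do not expect a serious obstacle here: the whole argument essentially amounts to unwinding definitions, using \cref{cor:g_inf} to replace the minimax value $g(t_{n_k})$ by the mixed Lagrangian expression, and then applying (H3) as a black box; the only point requiring care is noticing that $g(0) \le G(0,u^0_{t_{n_k}},q^{t_{n_k}})$, which relies on $u^0_{t_{n_k}} \in E(0)$ rather than on any optimality of $u^0_{t_{n_k}}$ at $t=0$.
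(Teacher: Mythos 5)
Your proposal is correct and follows essentially the same route as the paper: use \cref{cor:g_inf} to write $g(t_{n_k})=G(t_{n_k},u^0_{t_{n_k}},q^{t_{n_k}})$, use $u^0_{t_{n_k}}\in E(0)$ together with the infimum defining $g(0)$ to get $g(0)\le G(0,u^0_{t_{n_k}},q^{t_{n_k}})$, combine into the pointwise differential-quotient inequality, and invoke (H3). Your explicit remark that the second inequality rests only on membership in $E(0)$ (via $e(0,u^0_{t_{n_k}},q^{t_{n_k}})=0$), not on optimality, is exactly the content of the paper's inequality \eqref{eq:ineq_ge_two}.
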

\begin{proof}
    Let $(t_n)$, $t_n>0$ be a null-sequence, such that
    \begin{gather*}
        \lim_{n\rightarrow \infty}\frac{g(t_n)-g(0) }{t_n}
        = \underline{dg}(0).
    \end{gather*}
    From \cref{cor:g_inf} we get for all  $t\in (0,\tau]$, $(u^0,u^t)\in E(0)\times X(t)$ with $u^t-u^0\in \tilde X$,   and for all $ q^t\in Y(t,u^0,u^t)$,
    \ben\label{eq:ineq_ge_one}
    g(t) = G(t,u^t,q^t) = G(t,u^0,q^t).
    \een
    In addition, from the definition of $X(0)$, we have for every $p\in  Y$,
    \ben\label{eq:ineq_ge_two}
    g(0) \le G(0,u^0,0) = G(0,u^0,p).
    \een
    Equations \eqref{eq:ineq_ge_one} and \eqref{eq:ineq_ge_two} together yield: for all  $t\in (0,\tau]$, $(u^0,u^t)\in E(0)\times X(t)$ and $q^t\in Y(t,u^0,u^t)$,
    \ben\label{eq:chain_le}
    \frac{g(t)-g(0) }t
    \ge \frac{G(t, u^0  ,q^t)
    -G(0, u^0 ,q^t)}t.
    \een
    From Assumption (H3): for every null-sequence $(t_n)$ there exist $u^0\in X(0)$ and $p^0\in Y(0,u^0)$, such that there is a subsequence $(t_n)$, indexed the same,  $(u^0_{t_n},u^{t_n})\in E(0)\times X(t_n) $ and $q^{t_n}\in Y(t_n,u^0_{t_n},u^{t_n})$, such that
    \[
        \lim_{n\rightarrow \infty}\frac{g(t_n)-g(0) }{t_n}  \stackrel{\eqref{eq:chain_le}}{\ge}\liminf_{n\rightarrow \infty}\frac{G(t_n, u^0_{t_n} ,q^{t_n})
        -G(0, u^0_{t_n} ,q^{t_n})}{t_n} \ge  \partial_t  G(0,u^0,p^0)
    \]
    which is precisely \eqref{eq:concl_1}.
\end{proof}

\begin{remark}
    The lower bound obtained in \eqref{eq:lower_bound} is weaker than the one of e.g. \cite[Prop. 2.3]{a_SH_1995a}. In fact, there it is proven that
    \ben
    \inf_{u\in X(0)}\sup_{p^0\in Y(0,u)}  \partial_t  G(0, u, p^0) \le \underline{dg}(0).
    \een
    However, in this proposition it is assumed that  the optimisation problem appearing in the definition of $g(t)$ is a convex optimisation problem,  which together with assumptions on $X(0)$ leads to a lower bound
    for $\underline{dg}(0)$. Nevertheless our bound together with the bound proved in the following lemma will still lead to the right differentiability of $g$. The price we have to pay here is that the final expression of the derivative is not a minmax anymore and thus contains less information.
\end{remark}

\begin{lemma}\label{lem:upper_bound}
    Assume that $G$ satisfies Hypotheses \textup{(H0)--(H2)} and \textup{(H4)--(H5)}. Then
    \ben\label{eq:concl_2}
    \forall u^0\in X(0), \; \forall p^0\in Y(0,u^0),\quad   \overbar{dg}(0) \le \partial_t G(0,u^0,p^0).
    \een
    In particular,
    \ben\label{eq:upper_bound}
    \overline{dg}(0) \le \inf_{u\in X(0)}\sup_{q\in Y(0,u)} \partial_t G(0,u,q).
    \een
\end{lemma}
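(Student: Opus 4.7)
The plan is to bound $g(t)-g(0)$ above by a quantity that splits into a ``$t$--variation'' piece controlled by (H4) and a ``$u$--variation'' piece controlled by (H5), exploiting the fact that the adjoint annihilates directional derivatives in $\tilde X$.

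Fix $u^0\in X(0)$ and $p^0\in Y(0,u^0)$, and let $t\mapsto \bar u^t$ be the suboptimal path provided by (H4). Since $\bar u^t \in E(t)$, for any $p\in Y$ we have $G(t,\bar u^t,p)=f(t,\bar u^t)$, hence
\[
g(t) \;\le\; f(t,\bar u^t) \;=\; G(t,\bar u^t,p^0).
\]
On the other hand, because $u^0\in X(0)\subset E(0)$,
\[
g(0) \;=\; f(0,u^0) \;=\; G(0,u^0,p^0).
\]
Subtracting and splitting I will write
\[
g(t)-g(0) \;\le\; \bigl[G(t,\bar u^t,p^0)-G(0,\bar u^t,p^0)\bigr] \;+\; \bigl[G(0,\bar u^t,p^0)-G(0,u^0,p^0)\bigr].
\]

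The first bracket, divided by $t$, has $\limsup$ bounded by $\partial_t G(0,u^0,p^0)$ by assumption (H4). For the second bracket I expand by (H5):
\[
G(0,\bar u^t,p^0)-G(0,u^0,p^0) \;=\; \partial_u G(0,u^0,p^0)(\bar u^t-u^0) \;+\; \mathcal O(\|\bar u^t-u^0\|_X^2).
\]
Here the key cancellation occurs: since $p^0\in Y(0,u^0)$ solves the standard adjoint equation and $\bar u^t-u^0\in \tilde X$ by (H4), the linear term vanishes. Together with the Hölder-type decay $\|\bar u^t-u^0\|_X = o(t^{1/2})$ from (H4), this gives $\|\bar u^t-u^0\|_X^2 = o(t)$, so the second bracket divided by $t$ tends to $0$.

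Combining both estimates yields
\[
\overline{dg}(0) \;=\; \limsup_{t\searrow 0}\frac{g(t)-g(0)}{t} \;\le\; \partial_t G(0,u^0,p^0),
\]
which is \eqref{eq:concl_2}. Taking first the supremum over $p^0\in Y(0,u^0)$ and then the infimum over $u^0\in X(0)$ on the right-hand side preserves the inequality and delivers \eqref{eq:upper_bound}. The only subtle point is the use of $\bar u^t-u^0\in \tilde X$ as a valid test function against the adjoint equation; this is exactly why (H4) requires the path to stay in $u^0+\tilde X$, and is what makes the quadratic remainder from (H5) sufficient (rather than needing a sharper first-order estimate).
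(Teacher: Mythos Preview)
Your proof is correct and follows essentially the same route as the paper: bound $g(t)-g(0)$ above via the suboptimal path $\bar u^t\in E(t)$, split into a $t$-variation term handled by (H4) and a $u$-variation term handled by (H5), and use $\bar u^t-u^0\in\tilde X$ together with the adjoint equation to kill the first-order remainder so that the $o(t^{1/2})$ rate suffices. The paper additionally begins by selecting a null-sequence realizing $\overline{dg}(0)$, but this is cosmetic since the estimate holds for all small $t$ anyway.
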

\begin{proof}
    Let $(t_n)$, $t_n>0$ be a null-sequence, such that
    \begin{gather*}
        \lim_{n\rightarrow \infty}\frac{g(t_n)-g(0) }{t_n}
        = \overline{dg}(0).
    \end{gather*}
    We have for all $t\in [0,\tau]$, $u^t\in E(t)$ and all $p\in  Y$,
    \ben\label{eq:ineq_le_two}
    g(t) = \inf_{u\in E(t)} G(t, u,0) \le G(t,u^t,0)  = G(t,u^t,p).
    \een
    As a result for all  $t\in (0,\tau]$, $(u^0,u^t)\in X(0)\times E(t)$, and all $p\in  Y$,
    \ben\label{eq:le_gt_g0}
    \begin{aligned}[t]
        \frac{g(t)-g(0)}t & \stackrel{\eqref{eq:ineq_le_two}}{\le} \frac{G(t,u^t,p)
        -G(0,u^0,p)}t \\
        & = \frac{G(t,u^t,p)
            -G(0,u^t,p)}t + \frac{G(0,u^t,p)
        -G(0,u^0,p)}t.
    \end{aligned}
    \een
    To further estimate the right hand side note that it follows from  Assumption (H5):  For every $u^0\in X(0)$ and every $p^0\in Y(0,u^0)$,
    \ben\label{eq:est_G_second}
    |G(0,u,p^0)-G(0,u^0,p^0) - \partial_{u} G(0,u^0,p^0)(u-u^0)| = \mathcal O (\|u-u^0\|^2_{X}).
    \een
    On the other hand by definition of the adjoint state $p^0\in Y(0,u^0)$,
    \ben\label{eq:est_G_second2}
    \partial_{u} G(0,u^0,p^0)(\varphi)=0 \quad \text{ for all } \varphi\in \tilde X.
    \een
    Moreover thanks to  Assumption (H4)  we find  $t\mapsto \bar u^t:[0,\tau]\rightarrow X$,  such that $\bar u^0 = u^0$, $\bar u^t-u^0\in \tilde X$, $\bar u^t\in E(t)$ and $\|\bar u^t-u^0\|_{X} = o(t^{1/2})$. Hence combining \eqref{eq:est_G_second} and \eqref{eq:est_G_second2}  gives that for every $u^0\in X(0)$ and every $p^0\in Y(0,u^0)$ there is a constant $C$ (depending on $u^0$ and $p^0$) such that for all small $t$, we have
    \ben\label{eq:est_G_second3}
    \begin{aligned}[t]
        |G(0,\bar u^t,p^0)-G(0,u^0,p^0)| & \stackrel{\eqref{eq:est_G_second2}}{=} |G(0, \bar u^t,p^0)-G(0,u^0,p^0) -  \partial_{u} G(0,u^0,p)(\bar u^t-u^0) | \\
        &  \stackrel{\eqref{eq:est_G_second}}{\le} C\|\bar u^t-u^0\|_{X}^2.
    \end{aligned}
    \een
    As a result
    \ben\label{eq:est_G_second4}
    \begin{aligned}[t]
        \left|\limsup_{t\searrow 0} \frac{G(0,\bar u^t,p^0)-G(0,u^0,p^0)}{t}\right| & \le \limsup_{t\searrow 0}\frac{|G(0,\bar u^t,p^0)-G(0,u^0,p^0)|}{t} \\
        & \stackrel{\eqref{eq:est_G_second3}}{\le} C\limsup_{t\searrow 0} \frac{\|\bar u^t-u^0\|_{X}^2}{t} \stackrel{\eqref{eq:hoelder_ubar}}{=} 0.
    \end{aligned}
    \een
    Therefore from \eqref{eq:le_gt_g0} for every $u^0\in X(0)$, there is $t\mapsto \bar u^t$ as before such that, for all $p^0\in Y(0,u^0)$,
    \ben
    \begin{aligned}[t]
        \overbar{dg}(0) & \le     \limsup_{t\searrow 0}\frac{G(t,\bar u^t ,p^0)
            -G(0,  \bar u^t , p^0)}t +  \limsup_{t\searrow 0}\frac{ G(0, \bar u^t ,p^0)
        -G(0, u^0, p^0)}t \\
        & \le \partial_t G(0,u^0,p^0)
    \end{aligned}
    \een
    which is precisely \eqref{eq:concl_2}.
\end{proof}

\begin{remark}
    A sufficient condition in the convex case to derive   an upper bound  for $\overline{dg}(0)$ (even with inequality constraints) as in the previous lemma is the assumption that every point in $X(0)$ satisfies the \emph{Robinson constraint qualification} (see \cite[p. 5]{b_ITKU_2008a} and \cite{a_RO_1980a} for a definition). We refer to \cite[Prop. 2.1]{a_SH_1995a} which is due to \cite{a_LEMA_1980a}.
\end{remark}

\paragraph{Proof of \cref{thm.one}}
Let Hypotheses~\textup{(H0)--(H5)} hold true.    Combining \eqref{eq:concl_1} and \eqref{eq:concl_2} shows there exist $u^0\in X(0)$ and $p^0\in Y(0,u^0)$, such that
\ben\label{eq:final}
\partial_t G(0,u^0,p^0) \le \underline{dg}(0) \le \overbar{dg}(0) \le \partial_tG(0,u^0, p^0),
\een
which implies that $dg(0)$ exists and is equal to $\partial_t G(0,u^0,p^0)$.
If for all $u^0\in X(0)$ the set $Y(0,u^0)=\{p^0(u^0)\}$ is a singleton, then we obtain from \eqref{eq:concl_1} and \eqref{eq:final}, that for all $\tilde u\in X(0)$,
\ben
\inf_{u\in X(0)}\partial_t G(0,u,p^0(u)) \le  \partial_tG(0,u^0,p^0) \le dg(0) \le \partial_tG(0,\tilde u, p^0(\tilde u)).
\een
Taking the infimum over $\tilde u$ in $X(0)$ yields \eqref{eq:thm1_dg2}.

\paragraph{Alternative upper bound}
Let us finish this section with  an upper bound  for $dg(0)$, which can be derived by
replacing Hypotheses (H4),(H5) by the following relaxed Hypothesis~(H4'). Its advantage over
(H4),(H5) is that no H\"older continuity of $u^t$ is needed, but only convergence. However, the bound is weaker than the one of \cref{lem:lower_bound}. Compare also with the general result \cite[Thm. 4.5]{a_BOSH_1998a}.

\begin{assumption*}[H4']
    For every null-sequence $(t_n)$, $t_n>0$ and every $u^0 \in X(0)$, there exist  $p^0 \in Y(0,u^0)$,
    a subsequence $(t_{n_k})$ of $(t_n)$,
    $u^{t_{n_k}}\in E(t_{n_k})$,  and
    $q^{t_{n_k}}\in Y(t_{n_k},u^0,u^{t_{n_k}})$, such that
    \begin{gather*}
        \limsup_{
            k\to \infty} \frac{G(t,u^0,q^{t_{n_k}})
        -G(0,u^0,q^{t_{n_k}})}{t_{n_k}}
        \le \partial_t G(0, u^0, p^0).
    \end{gather*}
\end{assumption*}

\begin{lemma}\label{lem:bound_H4}
    Let Hypotheses~(H0)--(H3) and (H4') be satisfied. Then
    \ben
    \forall u^0\in X(0),\; \exists p^0\in Y(0,u^0), \quad \overline{dg}(0) \le \partial_t G(0,u^0,p^0).
    \een
    In particular,
    \ben\label{eq:improved_bound}
    \overline{dg}(0) \le \inf_{u\in X(0)} \sup_{p\in Y(0,u)}\partial_t G(0,u,p).
    \een
\end{lemma}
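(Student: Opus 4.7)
The plan is to mirror the argument of \cref{lem:upper_bound}, but with (H4') replacing the role of (H4)--(H5): instead of controlling a second-order remainder via a suboptimal H\"older-continuous path, I will exploit (H4') directly, which provides, along a subsequence, a family of averaged adjoints whose action on the $t$-increment of $G(\cdot,u^0,\cdot)$ is already controlled by $\partial_t G(0,u^0,p^0)$.

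First I pick a null-sequence $(t_n)$, $t_n>0$, realizing the limsup, i.e.\ $\lim_{n\to\infty}(g(t_n)-g(0))/t_n=\overline{dg}(0)$. Fix any $u^0\in X(0)$. Applying (H4') yields some $p^0\in Y(0,u^0)$, a subsequence $(t_{n_k})$, points $u^{t_{n_k}}\in E(t_{n_k})$ and averaged adjoints $q^{t_{n_k}}\in Y(t_{n_k},u^0,u^{t_{n_k}})$ with
\[
\limsup_{k\to\infty}\frac{G(t_{n_k},u^0,q^{t_{n_k}})-G(0,u^0,q^{t_{n_k}})}{t_{n_k}}\le \partial_t G(0,u^0,p^0).
\]
Any subsequence of $(g(t_n)-g(0))/t_n$ still converges to $\overline{dg}(0)$, so it suffices to estimate the subsequential difference quotient from above by the left-hand side of the inequality above.

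Next I bound $g(t_{n_k})-g(0)$ from above. Since $u^{t_{n_k}}\in E(t_{n_k})$, the definition of $g$ and of the Lagrangian give $g(t_{n_k})\le f(t_{n_k},u^{t_{n_k}})=G(t_{n_k},u^{t_{n_k}},q^{t_{n_k}})$. Invoking \cref{lem:reparam_lagrange} (which is available once $u^{t_{n_k}}-u^0\in\tilde X$, a condition built into the meaning of $Y(t_{n_k},u^0,u^{t_{n_k}})$ in (H4')) collapses the right-hand side to $G(t_{n_k},u^0,q^{t_{n_k}})$. Conversely, since $u^0\in X(0)\subset E(0)$, $g(0)=f(0,u^0)=G(0,u^0,q^{t_{n_k}})$. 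Hence
\[
\frac{g(t_{n_k})-g(0)}{t_{n_k}}\le \frac{G(t_{n_k},u^0,q^{t_{n_k}})-G(0,u^0,q^{t_{n_k}})}{t_{n_k}}.
\]
Passing to the limsup and applying (H4') yields $\overline{dg}(0)\le \partial_t G(0,u^0,p^0)$ for some $p^0\in Y(0,u^0)$. As $u^0\in X(0)$ was arbitrary, taking the infimum over $u^0$ and the supremum over $p\in Y(0,u^0)$ gives \eqref{eq:improved_bound}.

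The only real obstacle is ensuring that \cref{lem:reparam_lagrange} applies, which requires $u^{t_{n_k}}-u^0\in\tilde X$ in addition to $q^{t_{n_k}}$ solving the averaged adjoint equation; this is the natural admissibility condition implicit in the notation $Y(t_{n_k},u^0,u^{t_{n_k}})$ of (H4'). Apart from that, the proof is considerably cleaner than the one of \cref{lem:upper_bound}: we trade the quadratic remainder estimate of (H5) and the $t^{1/2}$ H\"older path of (H4) for a direct control on the averaged adjoint side, at the cost of obtaining only the weaker minimax bound \eqref{eq:improved_bound} instead of the pointwise identity \eqref{eq:thm1_dg}.
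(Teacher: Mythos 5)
Your proof is correct and follows essentially the same route as the paper's: bound $g(t_{n_k})$ by $G(t_{n_k},u^{t_{n_k}},q^{t_{n_k}})=G(t_{n_k},u^0,q^{t_{n_k}})$ via the averaged adjoint identity, use $g(0)=G(0,u^0,q^{t_{n_k}})$, and pass to the limsup with (H4'). Your remark that the admissibility condition $u^{t_{n_k}}-u^0\in\tilde X$ must be read into the notation $Y(t_{n_k},u^0,u^{t_{n_k}})$ of (H4') is apt; the paper glosses over this by citing \cref{cor:g_inf}, which is really \cref{lem:reparam_lagrange} in this context since $u^{t_{n_k}}$ is only assumed to lie in $E(t_{n_k})$, not $X(t_{n_k})$.
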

\begin{proof}
    Let $(t_n)$, $t_n>0$ be a null-sequence, such that
    \begin{gather*}
        \lim_{n\rightarrow \infty}\frac{g(t_n)-g(0) }{t_n}
        = \overline{d} g(0).
    \end{gather*}
    By definition we have for all $t>0$, $u^t\in E(t)$, $u^0\in X(0)$ and $q^t\in Y(t,u^0,u^t)$:
    \ben
    g(t) \le G(t,u^t,0) = G(t,u^t,q^t) = G(t,u^0,q^t),
    \een
    where in the last step we used \cref{cor:g_inf}. Hence we obtain from
    Hypothesis (H4') that we find $(t_n)$ and $u^0\in X(0)$, a subsequence,  denoted  the same, an element $p^0\in Y(0,u^0)$, elements $u^{t_n}\in E(t_n)$ and $q^{t_n}\in Y(t_n,u^0,u^{t_n})$, such that
    \ben\label{eq:bound_cor}
    \begin{split}
        \overline{dg}(0) = \lim_{n\to\infty}\frac{g(t_n)-g(0) }{t_n}   & \le \limsup_{n\to\infty}\frac{G(t_n, u^0 ,q^{t_n})-G(0,  u^0, q^{t_n})}{t_n}  \le \partial_t G(0,u^0,p^0).
    \end{split}
    \een
    Hence in particular for every $u^0\in X(0)$
    \ben
    \overline{dg}(0) \le \sup_{p\in Y(0,u^0)} \partial_tG(0,u^0,p).
    \een
    Taking the infimum over $u^0$ yields \eqref{eq:improved_bound}.
\end{proof}

As said before the statement of the previous lemma is weaker than the one of \cref{lem:upper_bound}. However,  if for all  $u^0\in X(0)$, the set $Y(0,u^0)$ is a singleton we obtain right differentiability of $g$.

\begin{corollary}
    Let Hypotheses~(H0)--(H3) and (H4') be satisfied. Assume that for all $u\in X(0)$ the set
    $Y(0,u)=\{p^0(u)\}$ is a singleton. Then $g$ is right differentiable and
    \ben
    dg(0)  =  \inf_{u\in X(0)}  \partial_t  G(0, u, p^0(u)).
    \een

\end{corollary}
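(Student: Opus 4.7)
The plan is to combine the two one-sided bounds already established under the stated hypotheses and use the singleton assumption to collapse them into a matching infimum.

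First I would invoke \cref{lem:lower_bound}, which relies only on Hypotheses (H0)--(H3) and hence is available here. It produces some $u^0 \in X(0)$ and $p^0 \in Y(0,u^0)$ such that
\[
\underline{dg}(0) \ge \partial_t G(0,u^0,p^0).
\]
Since $Y(0,u^0) = \{p^0(u^0)\}$ is a singleton by hypothesis, this $p^0$ must equal $p^0(u^0)$, and therefore
\[
\underline{dg}(0) \ge \partial_t G(0,u^0,p^0(u^0)) \ge \inf_{u\in X(0)} \partial_t G(0,u,p^0(u)).
\]

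Next I would invoke \cref{lem:bound_H4}, which applies because (H0)--(H3) and (H4') are all assumed. It yields that for every $u^0 \in X(0)$ there exists $p^0 \in Y(0,u^0)$ with $\overline{dg}(0) \le \partial_t G(0,u^0,p^0)$. Again, by the singleton assumption this $p^0$ is forced to be $p^0(u^0)$, so for every $u^0 \in X(0)$,
\[
\overline{dg}(0) \le \partial_t G(0,u^0,p^0(u^0)).
\]
Taking the infimum over $u^0 \in X(0)$ gives
\[
\overline{dg}(0) \le \inf_{u\in X(0)} \partial_t G(0,u,p^0(u)).
\]

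Finally I would sandwich the two bounds:
\[
\inf_{u\in X(0)} \partial_t G(0,u,p^0(u)) \le \underline{dg}(0) \le \overline{dg}(0) \le \inf_{u\in X(0)} \partial_t G(0,u,p^0(u)),
\]
so all inequalities are equalities. This shows that the limit $dg(0)$ exists and equals $\inf_{u\in X(0)} \partial_t G(0,u,p^0(u))$, as claimed. There is no real obstacle here: the proof is essentially an immediate specialisation of \cref{lem:lower_bound} and \cref{lem:bound_H4} to the singleton case, mirroring the argument given for the analogous singleton conclusion \eqref{eq:thm1_dg2} at the end of the proof of \cref{thm.one}; the only point to watch is that the $p^0$ produced by each lemma must be identified with $p^0(u^0)$ before the infimum over $X(0)$ can be taken.
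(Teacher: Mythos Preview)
Your proposal is correct and follows essentially the same approach as the paper: the paper's proof simply cites \cref{lem:lower_bound} together with \cref{lem:bound_H4} (specifically the inequality \eqref{eq:bound_cor}), and the singleton assumption collapses both bounds to the common infimum exactly as you describe.
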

\begin{proof}
    This directly follows from the proof of \cref{lem:bound_H4} (equation \eqref{eq:bound_cor}) and \cref{lem:lower_bound}.
\end{proof}

\section{Application to a finite dimensional problem}\label{sec:finite}
In this section we study a simple finite dimensional minimisation problem for which
we can apply \cref{thm.one}. The following example is a generalisation of  the one considered \cite[Sec. 6.3]{c_DE_2018a};  see also \cite[p.143]{b_DE_2012a}. We also refer to \cite{a_KU_2007a}, where existence  of  an optimisation problem with a quadratic cost function and quadratic separable inequality constraints is studied.
\subsection{Problem formulation}
Given two symmetric matrices $A,Q\in \VR^{d\times d}$ we define
\ben\label{problem:finite}
f(u) :=  Qu\cdot u, \qquad E:=\{u\in \VR^d:\; Au\cdot u =1\}
\een
and consider the minimisation problem
\ben\label{min:J}
\inf_{u\in E} f(u).
\een
The following assumption guarantees that \eqref{min:J} admits at least one solution.
\begin{assumption}\label{ass:finite}
    We assume that the pair of symmetric matrices $(Q,A)$ satisfies one of the following two conditions:
    \begin{itemize}
        \item[(a)] $Q$ is positive definite and there is $u\in \VR^d$ with $Au\cdot u>0$.
        \item[(b)] $Q$ is arbitrary and $A$ positive definite.
    \end{itemize}
\end{assumption}

\begin{remark}
    We note that the case of nonsymmetric $A$ and $Q$ can be reduced to the symmetric case.
    Indeed suppose $(Q,A)$ are nonsymmetric. Then $\hat A:= \frac12(A+A^\top)$ and $\hat Q:=\frac12(Q+Q^\top)$ satisfy (a) (resp. (b)) if and only if $(Q,A)$ satisfy (a) (resp. (b)). Hence we could
    work with $(\hat Q,\hat A)$ instead.
\end{remark}

\begin{lemma}
    Let $(Q,A)$ satisfy \cref{ass:finite}(a) or (b). Then the minimisation problem \eqref{min:J} admits
    a solution.
\end{lemma}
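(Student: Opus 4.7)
The plan is a standard direct-method argument, split along the two cases of Assumption~\ref{ass:finite}. First I would note that $E$ is nonempty in both cases: under (a) the hypothesis furnishes some $v$ with $Av\cdot v>0$, and under (b) positive definiteness of $A$ gives $Av\cdot v>0$ for any $v\ne 0$; in either case $u := v/\sqrt{Av\cdot v}$ lies in $E$. Next, $E$ is closed as the preimage of $\{1\}$ under the continuous map $u\mapsto Au\cdot u$.

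For case (b), I would observe that positive definiteness of $A$ yields $\lambda_{\min}(A)\|u\|^2 \le Au\cdot u = 1$ for every $u\in E$, so $E$ is bounded and hence compact. The continuous function $f(u)=Qu\cdot u$ then attains its infimum on $E$ by Weierstrass.

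For case (a), $E$ may fail to be bounded (e.g.\ if $A$ is indefinite), so compactness is unavailable. Instead I would use that $Q$ positive definite gives the coercivity bound $f(u)\ge \lambda_{\min}(Q)\|u\|^2$. Take a minimizing sequence $(u_n)\subset E$; then $f(u_n)$ is bounded from above, hence $\|u_n\|$ is bounded. Extract a subsequence $u_{n_k}\to u^\ast$; closedness of $E$ gives $u^\ast\in E$, and continuity of $f$ gives $f(u^\ast)=\inf_{u\in E}f(u)$.

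There is no real obstacle here; the only subtle point is to note that in case (a) the existence hypothesis "there exists $u$ with $Au\cdot u>0$" is needed only to ensure $E\ne\emptyset$, while coercivity of $f$ (from positive definiteness of $Q$) does all the work of confining the minimizing sequence to a bounded set despite $E$ itself possibly being unbounded.
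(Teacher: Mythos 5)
Your proof is correct and follows essentially the same route as the paper's: compactness of $E$ plus Weierstrass when $A$ is positive definite, and coercivity of $f$ from positive definiteness of $Q$ to bound a minimizing sequence when $A$ may be indefinite. You merely spell out the details (including the nonemptiness of $E$, which the paper leaves implicit) that the paper's one-line proof compresses.
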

\begin{proof}
    If $A$ is positive definite, then it is readily checked that $E$ is compact.  If $A$ is indefinite, then $E$ need not to be bounded, but in this case $Q$ is positive definite.
Hence in either cases \eqref{min:J} is finite and a minimiser exists. \end{proof}

\begin{remark}
    We note that if \cref{ass:finite}(a) or (b) is satisfied, then the
    set $E$ is a $d-1$ dimensional embedded $C^\infty$-submanifold of $\VR^d$. This is a consequence
    of the regular value theorem; see \cite[Thm.9.3, p.243]{b_AMES_II_2006a}. In fact, setting
    $f(u):= Au\cdot u-1$, then we have $\nabla f(u) =  2Au\ne 0$ for all $u\in E$, so that $0$ is a regular
    value of the function $f$.
\end{remark}

Let us continue with a few examples of matrices that satisfy (a) or (b).

\begin{example}\label{ex:1}
    \begin{itemize}
        \item[(i)] A pair $(Q,A)$ satisfying (a) is
            \ben
            Q=\begin{pmatrix}
                1 & 0\\
                0 & 1
                \end{pmatrix}, \qquad A = \begin{pmatrix}
                1 & 0\\
                0 & -1
            \end{pmatrix}.
            \een
            In this case $E=\{(x,y)^\top: x^2-y^2=1
            \}$ is a hyperbola.
        \item[(ii)] Another example of $(Q,A)$ satisfying (a) is
            \ben
            Q=\begin{pmatrix}
                1 & 0\\
                0 & 1
                \end{pmatrix}, \qquad A = \begin{pmatrix}
                0 & 0\\
                0 & 1
            \end{pmatrix}.
            \een
            In this case $E=\{(x,y)^\top: y^2=1\}=\VR\times \{1,-1\}$ are two lines parallel to the
            $x$-axis.
        \item[(iii)]
            A pair $(Q,A)$ for which (b) is satisfied is given by
            \ben
            Q=\begin{pmatrix}
                1 & 0\\
                0 & 0
                \end{pmatrix}, \qquad A = \begin{pmatrix}
                2 & 0\\
                0 & 1
            \end{pmatrix}.
            \een
            In this case $E=\{(x,y)^\top: 2 x^2+y^2=1\}$ is an ellipse.
    \end{itemize}
\end{example}
If (a) or (b) are satisfied, then \eqref{min:J} admits at least one minimiser
$u\in E$. The Lagrange multiplier rule shows that we find $p \in \VR$, such that
\ben\label{eq:Q_A}
Qu + p Au=0.
\een
So $\lambda := -p$ is a generalised eigenvalue for the matrices $(Q, A)$. It also follows from \eqref{eq:Q_A} and $Au\cdot u=1$ that
\ben
\lambda = -p = Qu\cdot u.
\een
From this it follows that if (a) holds, then $0<Qu\cdot u=-p$, so $p\ne 0$.  If (b) holds, then
$p=0$ is possible.

\subsection{Perturbation and Lagrangian}
We now consider the following perturbation of \eqref{problem:finite}
\ben\label{problem:finite_pert}
f(t,u) := Q(t)u\cdot u, \qquad E(t) := \{u^t\in \VR^d:\; A(t)u^t\cdot u^t =1\},
\een
where $Q,A:[0,\tau] \to \VR^{d\times d}$  are  matrix functions satisfying the following assumption.
\begin{assumption}\label{ass:ii}
    Let $Q,A:[0,\tau] \to \VR^{d\times d}$ be continuously differentiable functions, such that $Q(t)$ and $A(t)$ are symmetric for all $t\in [0,\tau]$ and the pair $(Q(0),A(0))$ satisfies \cref{ass:finite}, (a) or (b).
\end{assumption}
\begin{example}
    A pair $(Q,A)$ satisfying the previous assumption is given by
    \ben
    Q(t) = \begin{pmatrix}
        1 & 0 \\
        0 & 1
        \end{pmatrix}, \qquad A(t) = \begin{pmatrix}
        t & 0 \\
        0 & 1
    \end{pmatrix}.
    \een
    In this case $E(t) = \{(x,y)^\top:\; tx^2+y^2 =1\}$ is an ellipse for $t>0$, but
    $E(0)=\{(x,\pm 1)^\top: x\in \VR\}$ consists of two lines parallel to the $x$-axis.
    We illustrate the set $E(t)$ for various $t>0$ in \cref{eq:ellipse_lines}.
\end{example}

\begin{figure}
    \includegraphics[width=\textwidth]{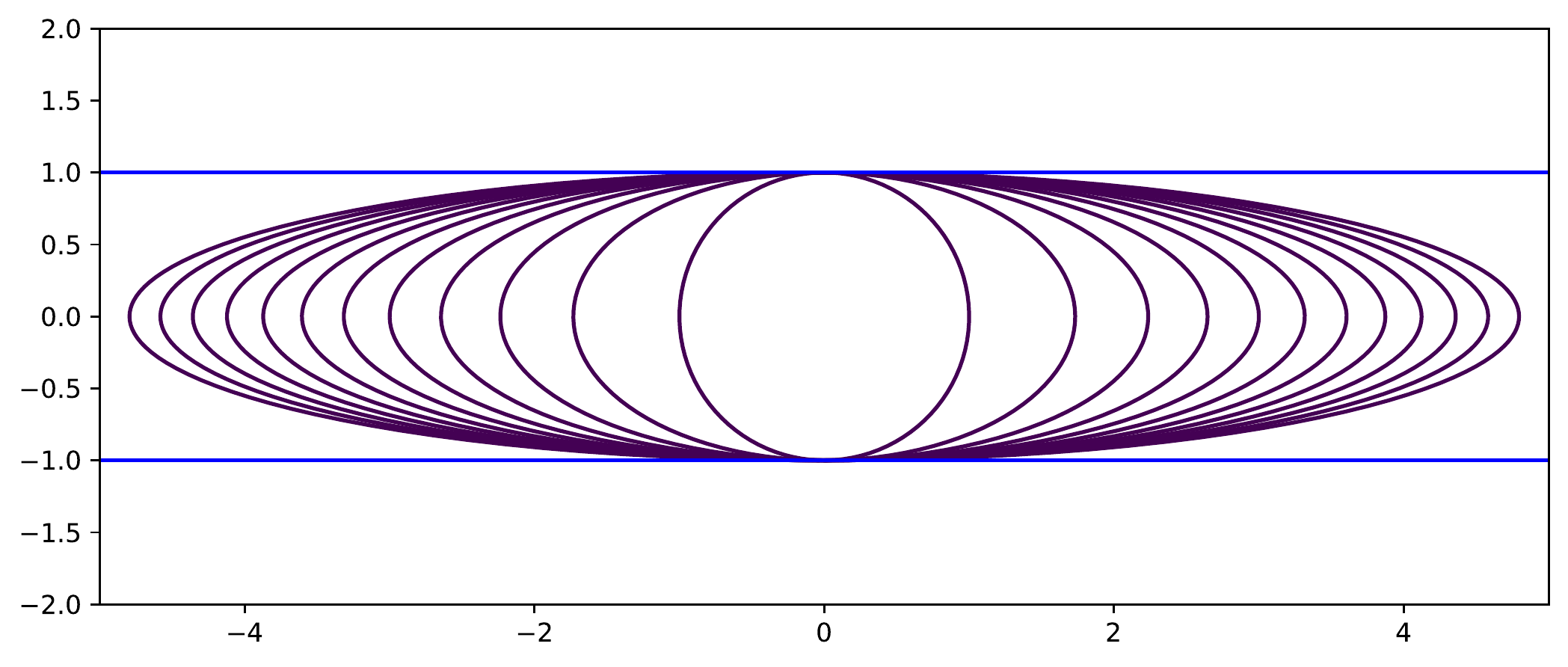}
    \caption{Ellipse converging to the two blue lines}
    \label{eq:ellipse_lines}
\end{figure}

We will show that if \cref{ass:ii} holds true, then
\ben\label{eq:min_finite_dim}
g(t) := \inf_{u\in E(t)} Q(t)u \cdot u
\een
is right differentiable at $t=0^+$ by applying \cref{thm.one}.

The Lagrangian $G:[0,\tau]\times \VR^d \times \VR \to \VR$ associated with the minimisation problem \eqref{eq:min_finite_dim} reads
\ben\label{E:lagrangian_finite}
G(t,u,p) := Q(t)u\cdot u + p(A(t)u\cdot u-1).
\een
By the Lagrangian multiplier rule we find for every minimiser $u^t\in X(t)$ a number $p^t\in \VR$, such that
\ben\label{eq:!_A_t}
Q(t)u^t + p^t A(t)u^t=0.
\een
It also follows from \eqref{eq:!_A_t} and $A(t)u^t\cdot u^t=1$, that
\ben
g(t) = Q(t)u^t\cdot u^t = -p^t.
\een
This shows that the set $Y(t,u^t)=\{p^t\}$ is a singleton and also
\ben
X(t) = \{u^t\in \VR^d:\; Q(t)u^t + p^t(t) A(t)u^t=0, \quad A(t)u^t\cdot u^t=1\}.
\een
The averaged adjoint equation associated with two states  $(u^0, u^t)\in E(0)\times E(t)$  reads: find $q^t \in \VR$ such that
\ben\label{eq:averaged_example_finite}
Q(t)(u^t+u^0) + q^tA(t)(u^t+u^0) = 0.
\een
The existence of $q^t$ can not be guaranteed for all pairs $(u^0,u^t)$; see \cref{subsec:counter} for a counter example. However, \cref{thm.one} only requires the existence of the averaged adjoint variable for certain pairs of states.

\begin{remark}
    We note that if \cref{ass:finite}, (b) is satisfied we have
    \ben
    g(t) = \inf_{u\in E(t)} f(t, u) = \inf_{\substack{u\in \VR^d\\ u\ne 0}} \frac{Q(t)u\cdot u}{A(t)u\cdot u}.
    \een
    Therefore in this case we can also apply Danskin's theorem (see, e.g., \cite[p. 524, Thm. 2.1]{b_DEZO_2011a}, \cite[p.140, Thm.5.1]{b_DE_2012a} or \cite[Thm. 4.1]{a_BOSH_1998a}) to prove the right differentiability of $g$ at $t=0$ (in fact, one could even consider a more general $f(t,u)$ in this case). \cref{ass:finite}, (a) does not allow this simplification since $A$ is not necessarily positive definite in this case. Indeed consider the pair $(Q,A)$ from \cref{ex:1}, (i) (which satisfies (a)):
    \ben
    \inf_{\substack{u\in \VR^2\\ u\ne 0}}\frac{Qu\cdot u}{Au\cdot u} = \inf_{\substack{u\in \VR^2\\ u\ne 0}} \frac{x^2+y^2}{x^2-y^2} = -\infty,
    \een
    while $\argmin_{u\in E} x^2+y^2=\{(-1,0)^\top,(1,0)^\top\}$ and hence $\min_{u\in E} Qu\cdot u=\min_{u\in E} x^2+y^2=1$  is finite.

\end{remark}

\subsection{Verification of the Hypotheses}
We now verify Hypotheses~(H0)--(H5) for the Lagrangian $G$ in \eqref{E:lagrangian_finite} with $\tilde X=X=\VR^d$ and $Y=\VR$  and a real number $\tau>0$ which has to be chosen sufficiently small. In view of \cref{ass:finite} it is clear that Hypothesis~(H0) is satisfied. Hypotheses~(H1) and (H2) are also obvious since $A$ and $Q$ are differentiable.

\paragraph{Verification of Hypothesis~(H3)}
To this end, we first prove the following lemma.
\begin{lemma}\label{lem:X_bounded}
    \begin{itemize}
        \item[(i)] For sufficiently small $\tau$ the set $S_\tau := \cup_{t\in [0,\tau]} X(t)$ is bounded.
        \item[(ii)] For every null-sequence $(t_n)$ and $u^{t_n}\in X(t_n)$,  there is a subsequence still indexed the same, and $u^0\in X(0)$, such that $u^{t_n} \to u^0$ as $n\to \infty$.
    \end{itemize}
\end{lemma}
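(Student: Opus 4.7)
The plan is to treat the two sub-cases of \cref{ass:finite} separately in part (i) and then deduce (ii) from (i) together with a continuity argument on $g$. Throughout I work with $\tau$ small enough that $Q(t)$ and $A(t)$ remain close in norm to $Q(0)$ and $A(0)$ respectively, using the continuity provided by \cref{ass:ii}.

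\textbf{Part (i).} In case (b), $A(0)$ is positive definite, so by continuity of $t\mapsto A(t)$ and of the eigenvalues there exist $\tau>0$ and $\lambda_0>0$ with $A(t)\ge \lambda_0 I$ for all $t\in[0,\tau]$. The constraint $A(t)u\cdot u=1$ then forces $\|u\|^2\le 1/\lambda_0$, so already $\bigcup_{t\in[0,\tau]} E(t)$ is bounded, and therefore so is $S_\tau$. In case (a), $Q(0)$ is positive definite, so by continuity there are $\tau>0$ and $q_0>0$ with $Q(t)\ge q_0 I$ on $[0,\tau]$. Hence for any $u\in X(t)$ one has $g(t)=Q(t)u\cdot u\ge q_0\|u\|^2$, and it suffices to bound $g(t)$ from above uniformly. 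To do this, pick any $u^0\in X(0)$, so that $A(0)u^0\cdot u^0=1$; for $\tau$ small enough, continuity of $A$ gives $A(t)u^0\cdot u^0>0$, so the rescaling $\bar u^t:=u^0/\sqrt{A(t)u^0\cdot u^0}$ lies in $E(t)$. Then $\bar u^t\to u^0$ as $t\to 0$, hence $g(t)\le Q(t)\bar u^t\cdot \bar u^t\to Q(0)u^0\cdot u^0$ and in particular $g(t)\le M$ uniformly on $[0,\tau]$ (shrinking $\tau$ if necessary). Combining gives $\|u\|\le\sqrt{M/q_0}$ for every $u\in S_\tau$.

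\textbf{Part (ii).} Given a null-sequence $(t_n)$ and $u^{t_n}\in X(t_n)\subset S_\tau$, part (i) produces, after extracting a subsequence (still indexed the same), a limit $u^0\in\VR^d$ with $u^{t_n}\to u^0$. Passing to the limit in $A(t_n)u^{t_n}\cdot u^{t_n}=1$ using continuity of $A$ yields $A(0)u^0\cdot u^0=1$, so $u^0\in E(0)$. It remains to verify $u^0\in X(0)$, that is,
\begin{equation*}
Q(0)u^0\cdot u^0 = g(0).
\end{equation*}
Since $u^0\in E(0)$, the inequality $Q(0)u^0\cdot u^0\ge g(0)$ is automatic. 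For the reverse, the same rescaling trick as in part (i) shows that for any $\tilde u^0\in X(0)$ the sequence $\tilde u^t:=\tilde u^0/\sqrt{A(t)\tilde u^0\cdot \tilde u^0}$ satisfies $\tilde u^t\in E(t)$ and $\tilde u^t\to\tilde u^0$, whence
\begin{equation*}
\limsup_{t\searrow 0} g(t)\le \limsup_{t\searrow 0} Q(t)\tilde u^t\cdot \tilde u^t = Q(0)\tilde u^0\cdot \tilde u^0 = g(0).
\end{equation*}
On the other hand, $g(t_n)=Q(t_n)u^{t_n}\cdot u^{t_n}\to Q(0)u^0\cdot u^0$ by continuity of $Q$ and of the inner product, so combining these two facts gives $Q(0)u^0\cdot u^0\le g(0)$. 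Together with the reverse inequality we conclude $u^0\in X(0)$.

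\textbf{Main obstacle.} The only delicate point is bounding $S_\tau$ in case (a), where $A$ need not be definite and hence $E(t)$ itself may be unbounded (as in \cref{ex:1}(i),(ii)); the key observation is that the positive-definiteness of $Q$ together with an upper comparison bound on $g(t)$ obtained by pushing a fixed $u^0\in X(0)$ onto $E(t)$ via one-dimensional rescaling is enough to force uniform boundedness of the \emph{minimizers} even when the feasible sets are unbounded. Once this is in place, part (ii) follows from standard compactness together with the continuity of $g$ at $0$ established above.
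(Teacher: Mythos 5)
Your proof of part (i) follows the same route as the paper's: in case (b) the uniform positive definiteness of $A(t)$ bounds all of $E(t)$ directly, and in case (a) the uniform positive definiteness of $Q(t)$ is combined with an upper bound on the optimal value obtained by rescaling a fixed element of $E(0)$ onto $E(t)$ — exactly the paper's chain of inequalities, just phrased through $g(t)$. For part (ii) the paper omits the proof as ``standard''; your compactness argument, passing to the limit in the constraint and sandwiching $Q(0)u^0\cdot u^0$ between $g(0)$ and $\limsup_{t\searrow 0} g(t)\le g(0)$ via the same rescaling trick, is a correct instantiation of precisely that standard argument.
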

\begin{proof}
    We  first show the boundedness of $S_\tau$ for $\tau$ small if either (a) or (b) of \cref{ass:finite} hold.
    First suppose that \cref{ass:finite}, (a) is satisfied.  Notice that $Q(t)$ is uniformly positive definite for all small $t$. Then by definition of $u^t\in X(t)$, we have for all $t\in [0,\tau]$,
    \ben\label{eq:bound_Q}
    \alpha\|u^t\|^2\le Q(t)u^t\cdot u^t \le Q(t)u\cdot u \quad \text{ for all } u\in \VR^d, \; A(t)u\cdot u=1
    \een
    for some $\alpha >0$.  Now pick any $u^0\in \VR^d$ with $A(0)u^0\cdot u^0=1$. By continuity we find $\tau >0$ and $c>0$ such that $A(t)u^0\cdot u^0\ge c>0$ for all $t\in [0,\tau]$ and hence
    $\hat u^t:= u^0/\sqrt{A(t)u^0\cdot u^0}$ satisfies $A(t)\hat u^t\cdot \hat u^t=1$ and thus $\hat u^t\in E(t)$. Then plugging $\hat u^t$ into \eqref{eq:bound_Q} we obtain
    \ben
    \alpha \|u^t\|^2 \le \frac{Q(t)u^0\cdot u^0}{A(t)u^0\cdot u^0} \le c^{-1}\max_{t\in [0,\tau]}\|Q(t)\|\|u^0\|^2.
    \een
    Thus $S_\tau$ is bounded.

    Now suppose that \cref{ass:finite}, (b) holds. Since $A(0)$ is positive definite and since $A(\cdot)$ is continuous also $A(t)$ is positive definite provided $t$ is small enough. So we find $\alpha>0$, such that $\alpha\|u\|^2\le A(t)u\cdot u$ for all $u\in \VR^d$ and all small $t$. Therefore for all $u^t\in E(t)$ we have
    $\alpha\|u^t\|^2 \le A(t)u^t\cdot u^t=1$ which implies that $S_\tau$ is bounded.

    The proof of $(ii)$ follows by standard arguments and  hence is omitted.
\end{proof}

\begin{lemma}\label{eq:averaged_adjoint_finite}
    For every null-sequence $(t_n)$, we find $u^0\in X(0)$ and $p^0\in Y(0,u_0)$, and a
    subsequence (denoted the same), elements $(u_{t_n}^0,u^{t_n})\in E(0)\times X(t_n)$ and
    $q^{t_n}\in Y(t_n,u_{t_n}^0,u^{t_n}) $, such that
    \begin{align}
        q^{t_n} \to p^0 \quad &\text{ as } n\to \infty,\\
        u^0_{t_n} \to u^0 \quad &\text{ as } n\to \infty.
    \end{align}
\end{lemma}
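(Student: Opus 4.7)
The plan is to construct $u^0_{t_n}$ so that the affine combination $u^{t_n}+u^0_{t_n}$ appearing in the averaged adjoint equation \eqref{eq:averaged_example_finite} becomes parallel to $u^{t_n}$; this will collapse the averaged adjoint equation to the ordinary Lagrange multiplier equation \eqref{eq:!_A_t} at $(t_n,u^{t_n})$, whose solvability is already known.

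First I would select, for each $n$, any $u^{t_n}\in X(t_n)$. By \cref{lem:X_bounded}(ii) a subsequence (still denoted $(t_n)$) satisfies $u^{t_n}\to u^0$ for some $u^0\in X(0)$. Define the candidate limit multiplier by the Lagrange relation at $t=0$, namely $p^0:= -Q(0)u^0\cdot u^0$, so that $Q(0)u^0+p^0A(0)u^0=0$ and $p^0\in Y(0,u^0)$ (using $A(0)u^0\cdot u^0=1$).

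Next I would introduce the scalars $\alpha_n := \sqrt{A(0)u^{t_n}\cdot u^{t_n}}$ and set $u^0_{t_n} := u^{t_n}/\alpha_n$. Since $A(0)u^0\cdot u^0=1$ and $A(\cdot)$ is continuous, $\alpha_n\to 1$, so $\alpha_n>0$ for $n$ large; this gives $A(0)u^0_{t_n}\cdot u^0_{t_n}=1$, i.e.\ $u^0_{t_n}\in E(0)$, and $u^0_{t_n}\to u^0$. The point of this construction is that $u^{t_n}+u^0_{t_n}=(1+1/\alpha_n)\,u^{t_n}$, so the averaged adjoint equation \eqref{eq:averaged_example_finite} becomes
\[
(1+1/\alpha_n)\bigl[Q(t_n)u^{t_n}+q^{t_n}A(t_n)u^{t_n}\bigr]=0.
\]
As $1+1/\alpha_n>0$, this reduces exactly to the Lagrange multiplier equation \eqref{eq:!_A_t} at $(t_n,u^{t_n})$, so a solution exists and is given by $q^{t_n}=p^{t_n}=-Q(t_n)u^{t_n}\cdot u^{t_n}$. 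Hence $q^{t_n}\in Y(t_n,u^0_{t_n},u^{t_n})$.

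Finally, continuity of $Q$ together with $u^{t_n}\to u^0$ gives
\[
q^{t_n}=-Q(t_n)u^{t_n}\cdot u^{t_n}\longrightarrow -Q(0)u^0\cdot u^0=p^0,
\]
which is the desired convergence. The only delicate point is the well-definedness of the scaling, i.e.\ that $\alpha_n\ne 0$ for large $n$; this is precisely why one first extracts the convergent subsequence from \cref{lem:X_bounded} and uses $A(0)u^0\cdot u^0=1>0$. The construction would fail for arbitrary pairs $(u^0,u^{t_n})$, which is exactly the situation pointed out in the discussion of the counter-example in \cref{subsec:counter}, and explains why Hypothesis (H3) allows the point $u^0_t$ to depend on $t$.
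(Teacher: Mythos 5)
Your proposal is correct and follows essentially the same route as the paper: rescale $u^{t_n}$ by $\sqrt{A(0)u^{t_n}\cdot u^{t_n}}$ to obtain $u^0_{t_n}\in E(0)$ linearly dependent on $u^{t_n}$, so that the averaged adjoint equation collapses to the ordinary Lagrange multiplier equation at $(t_n,u^{t_n})$, and then pass to the limit. The only cosmetic difference is that you express $q^{t_n}$ as $-Q(t_n)u^{t_n}\cdot u^{t_n}$ using the normalisation $A(t_n)u^{t_n}\cdot u^{t_n}=1$, whereas the paper writes the equivalent Rayleigh-quotient formula in terms of $u^{t_n}+u^0_{t_n}$; both give the same limit $p^0$ (and, as in the paper, the existence of the multiplier $q^{t_n}$ should be justified by first invoking the Lagrange multiplier rule at $(t_n,u^{t_n})$ before reading off its value).
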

\begin{proof}
    Let $(t_n)$ be an arbitrary null-sequence and let $(u^{t_n})$,  $u^{t_n}\in X(t_n)$ be given. Thanks to the previous \cref{lem:X_bounded} we find $u^0\in X(0)$ and a subsequence of $(t_n)$, still indexed the same, such that $u^{t_n} \to u^0$ as $n\to \infty$. By the Lagrange multiplier rule we find $p^0\in Y(0,u^0)$, such that
    \ben
    Q(0)u^0 + p^0A(0)u^0 =0, \quad A(0)u^0\cdot u^0=1.
    \een
    Since $A(0)u^0\cdot u^0=1$, we have $A(0)u^{t_n}\cdot u^{t_n}>0$ for $n$ large enough; therefore
    \ben
    u^0_{t_n} := \frac{1}{\sqrt{A(0)u^{t_n}\cdot u^{t_n}}} u^{t_n} \in E(0),
    \een
    is well-defined.
    It is clear that $u^0_{t_n}\to u^0$ as $n\to \infty$. By construction $u^0_{t_n}$ and $u^{t_n}$ are linearly dependent. Since $u^{t_n}\in X(t_n)$ the Lagrange multiplier rule shows $Q(t_n)u^{t_n} + q^{t_n}A(t_n)u^{t_n}=0$ for some $q^{t_n}\in \VR$, and thus also
    \ben\label{eq:averaged_finite_proof}
    Q(t_n)(u^0_{t_n} + u^{t_n}) + q^{t_n}A(t_n)(u^0_{t_n} + u^{t_n})=0,
    \een
    which is the averaged adjoint equation \eqref{eq:averaged_example_finite} associated with the pair $(u^0_{t_n},u^{t_n})$.
    Since $A(t_n)(u^0_{t_n} + u^{t_n})\ne 0$ for $n$ large, it follows from \eqref{eq:averaged_finite_proof} that
    \ben
    q^{t_n} =  -\frac{Q(t_n)(u^{t_n} + u^0_{t_n})\cdot (u^{t_n} + u^0_{t_n})}{A(t_n)(u^{t_n} + u^0_{t_n})\cdot (u^{t_n} + u^0_{t_n})} \to -\frac{Q(0)u^0\cdot u^0}{A(0)u^0\cdot u^0} = p^0.
    \een
    This shows  Hypothesis~(H3)  and finishes the proof.
\end{proof}
\paragraph{Verification of Hypothesis~(H4)}

The verification of Hypothesis~(H4) is a simple application of the inverse function theorem.

\begin{lemma}\label{lem:suboptimal}
    For every $u^0\in E(0)$ we find a differentiable function $t\mapsto u^t:[0,\tau]\to \VR^d$ with $u^t\in E(t)$ for all $t\in (0,\tau]$.
\end{lemma}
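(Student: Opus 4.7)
Fix $u^0\in E(0)$, so that $A(0)u^0\cdot u^0=1$. The plan is to apply the implicit function theorem to the scalar constraint
\[
F(t,u):=A(t)u\cdot u-1
\]
at the base point $(0,u^0)$. The key observation is that $A(0)u^0\neq 0$ (otherwise $A(0)u^0\cdot u^0$ would be zero rather than one); hence the constraint is regular at $u^0$, and I can build a one-parameter family by moving in the direction $v_0:=A(0)u^0$.

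Define $h(t,s):=F(t,u^0+sv_0)$. By \cref{ass:ii}, $h$ is $C^1$ on a neighbourhood of $(0,0)$, with $h(0,0)=0$ and
\[
\partial_s h(0,0)=2\,A(0)u^0\cdot v_0=2\|A(0)u^0\|^2>0.
\]
The implicit function theorem then produces $\tau>0$ and a $C^1$ function $s:[0,\tau]\to\VR$ with $s(0)=0$ and $h(t,s(t))=0$ for every $t\in[0,\tau]$. Setting $u^t:=u^0+s(t)v_0$ yields a $C^1$ path satisfying $u^t\in E(t)$ for all $t\in[0,\tau]$ and $u^t|_{t=0}=u^0$, which is exactly the claim.

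An even more elementary construction is the radial rescaling $u^t:=u^0/\sqrt{A(t)u^0\cdot u^0}$: by continuity, $A(t)u^0\cdot u^0>1/2$ for small $t$, so this map is $C^1$ on $[0,\tau]$, and one checks directly that $A(t)u^t\cdot u^t=1$. There is really no obstacle to overcome: everything hinges on $A(0)u^0\ne 0$, which holds automatically on $E(0)$. As a dividend for verifying Hypothesis~(H4), the differentiability of the path yields $\|u^t-u^0\|=O(t)$, which is much stronger than the $o(t^{1/2})$ decay demanded there; the remaining work for (H4) will be to check the $\limsup$ inequality on $G(t,\bar u^t,p^0)-G(0,\bar u^t,p^0)$ using the explicit form of the Lagrangian \eqref{E:lagrangian_finite} and the fact that $t\mapsto A(t),Q(t)$ are $C^1$.
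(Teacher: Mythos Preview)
Your proposal is correct. In fact your ``even more elementary construction'' --- the radial rescaling $u^t:=u^0/\sqrt{A(t)u^0\cdot u^0}$, well-defined because $A(0)u^0\cdot u^0=1$ forces $A(t)u^0\cdot u^0>0$ for small $t$ --- is precisely the argument the paper gives, so on that branch you match the paper exactly.

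Your first route via the implicit function theorem is also sound: the key point $A(0)u^0\neq 0$ (since $A(0)u^0\cdot u^0=1$) guarantees $\partial_s h(0,0)=2\|A(0)u^0\|^2>0$, and \cref{ass:ii} supplies the required $C^1$ regularity of $h$. This buys nothing extra here compared to the explicit rescaling, which already yields a $C^1$ path with $\|u^t-u^0\|=O(t)$; the IFT argument is just a more abstract way to see the same thing and would be the natural fallback if the constraint were not homogeneous of degree two in $u$.
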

\begin{proof}
    We argue similarly as in \cref{eq:averaged_adjoint_finite}. Let $u^0\in \VR^d$ with $A(0)u^0\cdot u^0 =1$ be given. Again by continuity we find $\tau>0$ and $c>0$, such that $A(t)u^0\cdot u^0\ge c$ for all $t\in [0,\tau]$. Now
    \ben
    u^t:= \frac{1}{\sqrt{A(t)u^0\cdot u^0}}
    \een
    belongs to $E(t)$ and is right differentiable at $t=0$.
\end{proof}

\paragraph{Verification of Hypothesis~(H5)}
To check Hypothesis~(H5) we compute for all $u\in \VR^d$ and $q\in \VR$,
\ben\label{eq:partial_u_G}
\partial_u G(0,u,p) = (Q(0) + pA(0))u.
\een
Hence by the fundamental theorem of calculus and \eqref{eq:partial_u_G} we obtain for all $u^0\in X(0)$, $u\in \VR^d$ and $p^0\in Y(0,u^0)$,
\ben
\begin{split}
    |G(0,u,p^0)-G(0,u^0,p^0) & - \partial_u G(0,u^0,p^0)(u-u^0)|\\
    & = |\int_0^1 \partial_u G(0,su+(1-s)u^0,p^0)(u-u^0)-  \partial_u G(0,u^0,p^0)(u-u^0)\;ds|\\
    & \le |u-u^0|^2 2\|Q(0) + p^0A(0)\|
\end{split}
\een
and this verifies Hypothesis~(H5).

\paragraph{Application of \cref{thm.one}}

Now we have verified all the Hypotheses of \cref{thm.one} and we obtain the following theorem.
\begin{theorem}\label{thm:finite_dimensional}
    Let $Q,A:[0,\tau] \to \VR^{d\times d}$ be two continuously differentiable
    functions satisfying \cref{ass:ii}. Then the function $g$ is right differentiable at $t=0$ and we find $u^0\in X(0)$, such that
    \ben
    dg(0) = \inf_{u\in X(0)} (Q'(0)+p^0A'(0))u\cdot u = (Q'(0)+p^0A'(0))u^0\cdot u^0,
    \een
    where $(u^0,p^0)\in \VR^d\times \VR$ satisfies:
    \begin{align*}
        Q(0)u^0+p^0A(0)u^0 &=0, \qquad p^0 = -Q(0)u^0\cdot u^0, \\
        A(0)u^0\cdot u^0 & = 1.
    \end{align*}
\end{theorem}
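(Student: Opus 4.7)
The strategy is to invoke \cref{thm.one} directly, since the preceding subsection already establishes Hypotheses~(H0)--(H5) for the Lagrangian $G(t,u,p) = Q(t)u\cdot u + p(A(t)u\cdot u - 1)$ in \eqref{E:lagrangian_finite}. Specifically, (H0) follows from the existence lemma for \eqref{min:J}; (H1)--(H2) are immediate since $G$ is polynomial in $u$ and $p$ and $t\mapsto (Q(t),A(t))$ is $C^1$; (H3) is the content of \cref{eq:averaged_adjoint_finite}, which constructs, for any null-sequence $(t_n)$ and any choice of minimisers $u^{t_n}\in X(t_n)$, perturbed base points $u^0_{t_n}\in E(0)$ along the ray through $u^{t_n}$ so that a well-defined averaged adjoint $q^{t_n}$ exists and converges to the unique adjoint $p^0\in Y(0,u^0)$; (H4) is provided by \cref{lem:suboptimal}, whose suboptimal path $\bar u^t = u^0/\sqrt{A(t)u^0\cdot u^0}$ is even $C^1$ in $t$, so the $o(t^{1/2})$ bound is trivially satisfied; and (H5) was verified in the preceding paragraph through the fundamental theorem of calculus applied to $s\mapsto \partial_u G(0,su+(1-s)u^0,p^0)$.

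Having checked all hypotheses with $X=\tilde X=\VR^d$ and $Y=\VR$, \cref{thm.one} yields the existence of $u^0\in X(0)$ and $p^0\in Y(0,u^0)$ with $dg(0) = \partial_t G(0,u^0,p^0)$. I would then simply differentiate the Lagrangian in $t$ explicitly to obtain
\[
\partial_t G(0,u,p) = Q'(0)u\cdot u + p\, A'(0)u\cdot u = (Q'(0) + pA'(0))u\cdot u,
\]
which gives the desired formula once evaluated at the pair $(u^0,p^0)$.

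The remaining task is to upgrade the conclusion to the infimum form stated in the theorem. As observed after \eqref{eq:!_A_t}, the Lagrange multiplier $p^t = -Q(t)u^t\cdot u^t$ is uniquely determined by any minimiser $u^t\in X(t)$, so $Y(0,u)=\{p^0(u)\}$ is a singleton for every $u\in X(0)$. Consequently, the singleton clause \eqref{eq:thm1_dg2} of \cref{thm.one} applies and delivers
\[
dg(0) = \inf_{u\in X(0)} \partial_t G(0,u,p^0(u)) = \inf_{u\in X(0)} (Q'(0) + p^0(u)A'(0))u\cdot u,
\]
with the infimum attained at the same $u^0$. Finally, the algebraic characterisation of $(u^0,p^0)$ is just the Lagrange multiplier system $Q(0)u^0 + p^0 A(0)u^0 = 0$ together with the constraint $A(0)u^0\cdot u^0 = 1$, from which one reads off $p^0 = -Q(0)u^0\cdot u^0$ by testing with $u^0$.

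I do not anticipate any substantial obstacle: all the genuine work (compactness of $S_\tau$, construction of the averaged adjoint along a rescaling of $u^{t_n}$, and the smooth suboptimal path) has already been absorbed into the verification of the hypotheses. The only thing to be careful about is to appeal to the singleton branch \eqref{eq:thm1_dg2} rather than the weaker sandwich \eqref{eq:thm1_dg_bound}, which requires pointing out that uniqueness of the Lagrange multiplier is automatic here because $A(0)u^0\ne 0$ on the constraint set (so $0$ is a regular value of $u\mapsto A(0)u\cdot u - 1$, as already noted via the regular value theorem).
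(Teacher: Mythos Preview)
Your proposal is correct and follows exactly the paper's approach: the paper simply states that Hypotheses~(H0)--(H5) have been verified in the preceding paragraphs and then records the theorem as an immediate consequence of \cref{thm.one}. Your write-up is in fact more explicit than the paper's, spelling out the computation of $\partial_t G(0,u,p)$ and the use of the singleton clause \eqref{eq:thm1_dg2} (which is justified since $p^0(u)=-Q(0)u\cdot u=-g(0)$ is the same for every $u\in X(0)$).
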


We can even obtain more using the differentiability of the suboptimal paths $u^t$ of \cref{lem:suboptimal} and the arguments of \cref{lem:lower_bound}. In contrast to the previous theorem the following lemma holds for all elements $u^0\in X(0)$.

\begin{lemma}
    Let $u^0\in X(0)$. Let $v\in \VR^d$ be any vector, such that
    there is $(u^t)$ with $u^t\in E(t)$ satisfying
    \ben
    v = \lim_{t\searrow 0}\frac{u^t-u^0}{t} \quad \text{ and } \quad A'(0)u^0\cdot u^0\ne 0.
    \een
    Then we have
    \ben
    dg(0) \le \partial_t G(0,u^0,\mu) = (Q'(0)+\mu A'(0))u^0\cdot u^0,
    \een
    where $\mu$ is given by
    \ben
    \mu := -\frac{Q(0)u^0\cdot v}{A(0)u^0\cdot v}.
    \een
\end{lemma}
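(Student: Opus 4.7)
The plan is to exploit the simple structure of the Lagrangian $G(t,u,p)=Q(t)u\cdot u+p(A(t)u\cdot u-1)$ together with the suboptimal path $(u^t)$ provided by the hypothesis, choosing $\mu$ precisely to cancel the unwanted first–order variation of $u^t$.

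First I observe that for any $\mu\in\VR$, the constraints $A(t)u^t\cdot u^t=1$ and $A(0)u^0\cdot u^0=1$ give $G(t,u^t,\mu)=Q(t)u^t\cdot u^t$ and $G(0,u^0,\mu)=Q(0)u^0\cdot u^0=g(0)$. Since $u^t\in E(t)$, the definition of $g(t)$ yields
\ben
\frac{g(t)-g(0)}{t}\le\frac{G(t,u^t,\mu)-G(0,u^0,\mu)}{t}.
\een
I then split the right hand side as
\ben
\frac{G(t,u^t,\mu)-G(0,u^t,\mu)}{t}+\frac{G(0,u^t,\mu)-G(0,u^0,\mu)}{t}.
\een

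Next I pass to the limit $t\searrow 0$. Using $u^t\to u^0$ and the $C^1$ regularity of $Q,A$, the first quotient converges to $Q'(0)u^0\cdot u^0+\mu A'(0)u^0\cdot u^0=\partial_tG(0,u^0,\mu)$. For the second quotient, I factor
\[
G(0,u^t,\mu)-G(0,u^0,\mu)=(Q(0)+\mu A(0))(u^t+u^0)\cdot(u^t-u^0),
\]
so that, using $(u^t-u^0)/t\to v$ and $u^t+u^0\to 2u^0$, it converges to $2(Q(0)u^0+\mu A(0)u^0)\cdot v$. The key step is now the choice of $\mu$: setting $\mu:=-(Q(0)u^0\cdot v)/(A(0)u^0\cdot v)$ exactly kills this limit.

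To justify that $\mu$ is well defined I differentiate the constraint $A(t)u^t\cdot u^t=1$ at $t=0$ along the path, which yields $A'(0)u^0\cdot u^0+2A(0)u^0\cdot v=0$; hence the assumption $A'(0)u^0\cdot u^0\ne 0$ is equivalent to $A(0)u^0\cdot v\ne 0$, so $\mu$ is finite. Combining the two limits with the inequality above gives $\limsup_{t\searrow 0}(g(t)-g(0))/t\le\partial_tG(0,u^0,\mu)$, and since $dg(0)$ exists by \cref{thm:finite_dimensional}, the conclusion $dg(0)\le\partial_tG(0,u^0,\mu)=(Q'(0)+\mu A'(0))u^0\cdot u^0$ follows. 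There is no serious obstacle here: the only point requiring care is the well-definedness of $\mu$, which is precisely the role of the nondegeneracy condition $A'(0)u^0\cdot u^0\ne 0$.
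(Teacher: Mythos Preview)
Your proof is correct. The overall strategy---use $u^t\in E(t)$ to get $g(t)-g(0)\le G(t,u^t,\cdot)-G(0,u^0,\cdot)$, then pick the multiplier so that the variation in $u$ contributes nothing in the limit, and justify the denominator via differentiating the constraint---is the same as the paper's.

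There is, however, a small structural difference worth noting. The paper introduces a $t$-dependent multiplier $q^t$ solving the one-dimensional averaged adjoint condition $(Q(t)+q^tA(t))(u^t+u^0)\cdot(u^t-u^0)=0$, which makes $G(t,u^t,q^t)=G(t,u^0,q^t)$ hold \emph{exactly} for each $t$; one then passes to the limit in the single quotient $\tfrac{1}{t}\bigl(G(t,u^0,q^t)-G(0,u^0,q^t)\bigr)$ using $q^t\to\mu$. You instead fix $\mu$ from the outset and split as $\tfrac{1}{t}\bigl(G(t,u^t,\mu)-G(0,u^t,\mu)\bigr)+\tfrac{1}{t}\bigl(G(0,u^t,\mu)-G(0,u^0,\mu)\bigr)$, showing the second piece tends to zero. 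Your route avoids the auxiliary sequence $q^t$ and is slightly more elementary; the paper's route ties the argument back to the averaged adjoint framework of \cref{lem:reparam_lagrange}. Both rely on the same identity $(Q(0)+\mu A(0))(u^t+u^0)\cdot(u^t-u^0)$ and the same computation $A'(0)u^0\cdot u^0+2A(0)u^0\cdot v=0$ to ensure $\mu$ is well defined.
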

\begin{proof}
    Let $u^0\in X(0)$ and let $u^t$ be a differentiable path with $v:= \lim_{t\searrow 0}(u^t-u^0)/t$. By definition $A(t)u^t\cdot u^t=1$ and hence $A'(0)u^0\cdot u^0 + 2 A(0)u^0\cdot v=0$. In view of $A'(0)u^0\cdot u^0 \ne0$, we conclude
    \ben\label{eq:not_eq_zero}
    A(0)u^0\cdot v \ne 0.
    \een
    Now we choose $q^t\in \VR$ such that
    \ben\label{eq:averaged_enhanced}
    \begin{split}
        \int_0^1 & \partial_u G(t,su^t + (1-s)u^0,q^t)(u^t-u^0)  \\
        & =  2(Q(t) + q^t A(t))(u^t-u^0)=0.
    \end{split}
    \een
    It is readily checked using \eqref{eq:not_eq_zero} that $A(t)(u^t+u^0)\cdot (\frac{u^t-u^0}{t})\ne0$ for all $t$ small and thus we obtain from \eqref{eq:averaged_enhanced},
    \ben
    \begin{split}
        q^t & =-\frac{Q(t)(u^t+u^0)\cdot (\frac{u^t-u^0}{t})}{A(t)(u^t+u^0)\cdot (\frac{u^t-u^0}{t})}
        \to -\frac{Q(0)u^0\cdot v}{A(0)u^0\cdot v} =:\mu.
    \end{split}
    \een
    Now we apply the mean value theorem to obtain
    \ben
    G(t,u^t,q^t)-G(t,u^0,q^t) = \int_0^1 \partial_u G(t,su^t + (1-s)u^0,q^t)(u^t-u^0)\;ds =0,
    \een
    where the last equality follows from the definition of $q^t$. Hence we obtain that for every null-sequence $t\searrow 0$ and every $u^0\in X(0)$, we find $u^t\in E(t)$ and $q^t\in \VR$, such that
    \ben
    G(t,u^t,q^t)=G(t,u^0,q^t).
    \een
    Hence we can use the same arguments as in \cref{lem:lower_bound} to conclude $dg(0) \ge \partial_t G(0,u^0,\mu)$.

\end{proof}

\subsection{On the condition (H3) and non-existence of averaged adjoints}\label{subsec:counter}
Let us give an explicit example where  for all sufficiently small $t>0$  and any pair $(u^0,u^t)\in X(0)\times X(t)$ the set
of averaged adjoints $Y(t,u^0,u^t)=\emptyset$ is empty. However, for every $u^t\in X(t)$, we find $u^0\in X(0)$ and $u^t_0\in E(0)$, such that $u^t_0\to u^0$ and $Y(t, u^t_0, u^t)\ne \emptyset$. Therefore Hypothesis (H3) is necessary in some cases and can not be simplified.

Consider again $g$ defined by \eqref{eq:min_finite_dim} with
\begin{align}
    E(t) & = \{u=(x,y)^\top\in \VR^2:\; A(t)u\cdot u =1\}, \\
    A(t)& := \begin{pmatrix}
        1 & t \\
        t & 1
        \end{pmatrix}, \qquad Q = \begin{pmatrix}
        1 & 0 \\
        0 & 2
    \end{pmatrix}.
\end{align}
Clearly \cref{ass:ii} is satisfied for this example and hence $g$ is right differentiable thanks to \cref{thm:finite_dimensional}. At $t=0$ we have
\begin{align}
    E(0)  = \{(x,y)^\top \in \VR^2:\; x^2+y^2=1\}, \quad X(0)  = \{(\pm 1,0)^\top\}, \quad Y(0,(\pm 1,0)^\top)=\{1\}
\end{align}
and for $t>0$ and  $u^t\in X(t)$ we find $p^t\in Y(t,u^t)$ solving
\ben
Qu^t + p^t A(t)u^t=0 \quad \Leftrightarrow \quad A(t)^{-1}Qu^t + p^t u^t=0.
\een
We compute
\ben
A(t)^{-1}Q = \frac{1}{1-t^2} \underbrace{\begin{pmatrix} 1 & - 2t \\ -t & 2 \end{pmatrix}}_{=:\tilde A(t)}.
\een
The eigenvalues of $\tilde A(t)$ are given by
\ben
\lambda^{\pm}(t) = \frac12\left(3\pm \sqrt{1+8t^2}\right)
\een
and therefore $\{\frac{1}{1-t^2}\lambda^-(t),\frac{1}{1-t^2}\lambda^+(t)\}$ are the eigenvalues of $ A(t)^{-1}Q$ and the eigenspaces are one dimensional. Moreover, $g(t) = \frac{1}{1-t^2}\lambda^-(t)$. For small $t>0$ we have $\lambda^+(t) >2$ and $\lambda^-(t) < 1$. The corresponding eigenvalue equations lead to
\begin{align}
    x(\lambda^\pm - 1) &= -2ty, \\
    y(\lambda^\pm - 2) & =  - tx.
\end{align}
So for $\lambda^+$ we obtain as eigenvector for $t>0$,
\ben\label{eq:at}
u^t_+ = \begin{pmatrix}
    a^t \\ 1
\end{pmatrix}, \quad a^t = \frac{-2t}{\lambda^+(t)-1}<0,
\een
and $a^t \to 0$ for $t\searrow 0$. Similarly, for $\lambda^-$ we obtain
\ben\label{eq:bt}
u_-^t = \begin{pmatrix}
    1 \\ b^t
\end{pmatrix}, \quad b^t = \frac{-t}{\lambda^-(t)-2}>0,
\een
and  $b^t \to 0$ for $t\searrow 0$. It follows that
\ben
X(t) = \{\hat u_-^t,-\hat u_-^t\}, \quad    \hat u_-^t:= \frac{1}{\sqrt{A(t)u_-^t\cdot u_-^t}}u_-^t.
\een
Now let us check that the averaged adjoint equation is not solvable.
\begin{lemma}
    For $t>0$ small and for $(\hat u^t_-, \pm u^0)$ with $u^0=(1,0)^\top$, there is no $q^t\in \VR$, such that
    \ben\label{eq:averaged}
    Q(\hat u^t_- \pm u^0) + q^t A(t)(\hat u^t_- \pm u^0) =0.
    \een
    In particular $Y(t,u^0,u^t_-)=Y(t,-u^0,u^t_-)=\emptyset$ and no averaged adjoint  state  for such pairs  exists.
\end{lemma}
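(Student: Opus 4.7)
The plan is to reduce the averaged adjoint equation to a statement about eigenvectors of $A(t)^{-1}Q$ and then show that $\hat u^t_- \pm u^0$ fail to be eigenvectors.

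\textbf{Step 1: Reformulation.} For small $t>0$, $A(t)$ is invertible (since $\det A(t) = 1-t^2$), so \eqref{eq:averaged} is equivalent to saying that $v_t^{\pm} := \hat u^t_- \pm u^0$ is an eigenvector of $A(t)^{-1}Q$ with eigenvalue $-q^t$. Therefore it suffices to show that, for $t>0$ sufficiently small, $v_t^{\pm}$ lies in neither of the two one-dimensional eigenspaces $\mathrm{span}\{u^t_+\}$ and $\mathrm{span}\{u^t_-\}$ identified before the lemma.

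\textbf{Step 2: Writing the candidate vectors explicitly.} By definition, $\hat u^t_- = c_t \, u^t_-$ with $c_t := 1/\sqrt{A(t) u^t_- \cdot u^t_-}$ and $c_t \to 1$ as $t \searrow 0$. Hence
\ben
v_t^{\pm} = c_t u^t_- \pm u^0, \qquad u^t_- = \begin{pmatrix} 1 \\ b^t \end{pmatrix}, \qquad u^0 = \begin{pmatrix} 1 \\ 0 \end{pmatrix}.
\een

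\textbf{Step 3: Not parallel to $u^t_-$.} For $t > 0$ we have $b^t > 0$ by \eqref{eq:bt}, so $u^t_-$ and $u^0$ are linearly independent, hence $u^0 \notin \mathrm{span}\{u^t_-\}$. Consequently $v_t^{\pm} = c_t u^t_- \pm u^0 \notin \mathrm{span}\{u^t_-\}$, since otherwise $u^0$ itself would be in $\mathrm{span}\{u^t_-\}$.

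\textbf{Step 4: Not parallel to $u^t_+$.} Suppose $v_t^{+} = \alpha\, u^t_+$ for some $\alpha \in \VR$. Comparing components using $u^t_+ = (a^t, 1)^\top$ we get
\ben
c_t + 1 = \alpha a^t, \qquad c_t b^t = \alpha,
\een
which after eliminating $\alpha$ gives the scalar equation $1 + c_t(1 - a^t b^t) = 0$. Using $c_t \to 1$, $a^t \to 0$, $b^t \to 0$ as $t \searrow 0$ (from \eqref{eq:at} and \eqref{eq:bt}), the left-hand side tends to $2$, so the equation fails for all sufficiently small $t > 0$. An analogous computation for $v_t^{-}$ yields $1 - c_t(1 - a^t b^t) = 0$, whose left-hand side tends to $0$ but one verifies directly using $c_t = 1 + O(t)$ and $a^t b^t = O(t^2)$ that it remains nonzero for small $t>0$.

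\textbf{Step 5: Conclusion.} Combining Steps 3 and 4, $v_t^{\pm}$ is not an eigenvector of $A(t)^{-1}Q$ for $t>0$ small, so no scalar $q^t$ solves \eqref{eq:averaged}. In particular $Y(t, u^0, \hat u^t_-) = Y(t, -u^0, \hat u^t_-) = \emptyset$.

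The main obstacle is Step 4, where one must check that the scalar equations $1 \pm c_t(1 - a^t b^t) = 0$ fail; the $+$ case is immediate from the limit, but the $-$ case requires a sharper expansion in $t$, which can be extracted from \eqref{eq:at}, \eqref{eq:bt} and the definition of $c_t$.
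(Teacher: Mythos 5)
Your strategy is the same as the paper's: rewrite \eqref{eq:averaged} as the statement that $\hat u^t_-\pm u^0$ is an eigenvector of $A(t)^{-1}Q$, use that both eigenspaces are one-dimensional, and exclude membership in $\operatorname{span}\{u^t_-\}$ and $\operatorname{span}\{u^t_+\}$ by comparing components. Steps 1--3 and the $+$ case of Step 4 are correct and match the paper; Step 3 is in fact a slightly cleaner phrasing of the paper's first case, and it also disposes of the degenerate possibility $v_t^{\pm}=0$ (for which \eqref{eq:averaged} would be solvable by every $q^t$).

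The one genuine gap is the $-$ case of Step 4. You reduce it to showing $1-c_t(1-a^tb^t)\ne 0$ and then assert this ``using $c_t=1+O(t)$ and $a^tb^t=O(t^2)$''; those bounds only give $1-c_t(1-a^tb^t)=O(t)$, which is compatible with the expression vanishing. Since both $1-c_t$ and $a^tb^t$ are in fact of order $t^2$, you need the leading coefficients: from \eqref{eq:at} and \eqref{eq:bt} one gets $a^t=-2t+O(t^3)$ and $b^t=2t/(1+\sqrt{1+8t^2})=t+O(t^3)$, hence $A(t)u^t_-\cdot u^t_-=1+2tb^t+(b^t)^2=1+3t^2+O(t^4)$ and $c_t=1-\frac{3}{2}t^2+O(t^4)$, so that $1-c_t(1-a^tb^t)=-\frac{1}{2}t^2+O(t^4)\ne 0$ for all sufficiently small $t>0$. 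You correctly flag that a sharper expansion is needed, but you do not carry it out, so as written the $-$ case is not proved. For what it is worth, the paper's own proof is equally terse at exactly this point: it treats only $\hat u^t_-+u^0$ in detail, where the contradiction is the soft one that the left side tends to $2u^0\ne 0$ while the right side tends to $0$, and dismisses the $-u^0$ case with ``the same arguments''; for $\hat u^t_--u^0=\alpha u^t_+$ both sides tend to zero, so the soft limit argument no longer applies and your explicit scalar reduction, completed with the expansion above, is the right way to close it.
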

\begin{proof}
    Suppose $q^t\in Y(t,u^0,u_-^t)$ exists. Then in view of the definition \eqref{eq:averaged}, this means that $q^t\in \{-\frac{1}{1-t^2}\lambda^-(t),-\frac{1}{1-t^2}\lambda^+(t)\}$ and in view of \eqref{eq:averaged}
    \ben
    \begin{split}
        \hat u^t_-+u^0 = & \frac{1}{\sqrt{A(t)u_-^t\cdot u_-^t}}\begin{pmatrix} 1 \\ b^t \end{pmatrix} +  \begin{pmatrix} 1 \\ 0 \end{pmatrix}
    \end{split}
    \een
    must be an eigenvector of $A(t)^{-1}Q$. Since the eigenspaces are one dimensional, we must have one of the two cases:
    \begin{subequations}
        \ben\label{eq:case2}
        \hat u^t_- + u^0 = \alpha u^t_-,
        \een
        \ben\label{eq:case4}
        \hat u^t_-+u^0 = \alpha u^t_+,
        \een
    \end{subequations}
    for some $\alpha\in \VR$. By comparing the last component of the vectors of \eqref{eq:case2}, we see that equality in \eqref{eq:case2} can only happen for $\alpha = \frac{1}{\sqrt{A(t)u_-^t\cdot u_-^t}}$, which gives
    \ben
    \hat u^t_-+u^0 =  \hat u_-^t \quad \Rightarrow \quad u^0 =0
    \een
    and thus a contradiction.
    Similarly for \eqref{eq:case4}, we compare the last component and see that equality can only be true for
    $\alpha = \frac{b^t}{\sqrt{A(t)u^t_-\cdot u^t_-}}$, which leads to
    \ben
    \hat u^t_-+u^0 =  \frac{b^t}{\sqrt{A(t)u_-^t\cdot u_-^t}} \begin{pmatrix} a^t \\ 1\end{pmatrix}
    \een
    which is also impossible since $a^t\to 0$ and $b^t\to 0$ and hence the right vector
    goes to zero as $t\searrow 0$, however, the left hand side goes to $2u^0\ne0$. Therefore \eqref{eq:averaged} is not solvable and $Y(t,u^0,u^t_-)=\emptyset$. The same arguments show that $Y(t,-u^0,u^t_-)=\emptyset$.
\end{proof}

Despite this negative result, we can define for every $u^t\in X(t)$ the element \linebreak $u^0_t:= u^t/\sqrt{A(0)u^t\cdot u^t}\in E(0)$, which is linearly dependent on $u^t$ and hence $Y(t,u^0_t,u^t)\ne \emptyset$. Moreover, if $u^t$ converges, also $u^0_t$ converges.

\subsection{Second  order sufficient conditions}
Let us finish this section by making some remarks on second order analysis results. We notice that in \cite[Sec. 4.9.1., p. 365]{b_BOSH_2000a} equality constraints are treated using second order analysis. However, for our problem this is not applicable. In fact in \cite[Thm. 4.125, p. 365]{b_BOSH_2000a} the following problem is studied
\ben
g(t) = \inf_{u \in E(t)}f(t,u ), \quad E(t) = \{ u^t\in X:\; e(t,u^t)=0\},
\een
where $f:[0,\tau]\times X\to \VR$ and $e:[0,\tau]\times X\to Y$ are two times differentiable functions. Introduce the associated Lagrangian
\ben
G(t,u,p):= f(t,u) + \langle p, e(t,u)\rangle_{Y^*,Y}, \quad t\in [0,\tau], \; u\in X,\; p \in Y^*.
\een
Suppose that $\partial_u e(0,u^0):X\to Y$ is surjective, such that the Lagrange multiplier
$q^0\in Y(0,u^0)$ for every $u^0\in X(0)$ is unique. Then in \cite[Thm. 4.125, p. 365]{b_BOSH_2000a} it is proved that for given $u^0\in X(0)$ and $q^0\in Y(0,u^0)$, there exist locally unique solutions $(u^t,p^t)$ of
\begin{align}\label{eq:opt_example}
    \partial_u G(t,u^t,p^t)=   \partial_u f(t,u^t) + \langle p^t, \partial_u e(t,u^t)\rangle_{Y^*,Y} & = 0,\\
    e(t,u^t) & = 0,
\end{align}
provided the second order condition holds:
\ben
\exists \alpha >0, \; \partial_u^2 G(0,u^0,p^0)(v)(v)                                                        \ge \alpha \|v\|^2_X \quad \text{ for all } v\in \text{kern}(\partial_u e(0,u^0)).
\een
This is a consequence of the implicit function theorem. However, this does not work in our setting in general since the second order condition for \eqref{problem:finite_pert}  reads with $X=\VR^d$ equipped with the  Euclidean  norm $\|\cdot\|$, $Y=\VR$, $e(t,u) =A(t)u\cdot u-1$ and $f(t,u):= Q(t)u\cdot u$:
\ben\label{eq:second_order}
\exists \alpha >0, \; (Q(0) + p^0 A(0))v\cdot v  \ge \alpha \|v\|^2 \quad \text{ for all } v\in (A(0)u^0)^\bot.
\een
Then \eqref{eq:opt_example} would read:
\ben
(Q(t) + p^t A(t))u^t =0, \quad A(t)u^t\cdot u^t = 1.
\een
However, it is readily seen that \eqref{eq:second_order} cannot hold when the
eigenvalue $-p^0$ of $(Q(0),A(0))$ is not simple. Take for instance $A(0)=I$, $Q(0)=Q(0)^\top$, and assume $-p^0$ is not geometrically simple (i.e., the eigenspace has dimension $\ge 2$) eigenvalue of $Q(0)$.  Then \eqref{eq:second_order} cannot hold, since $\text{kern}(A(0)u^0) = (u^0)^\bot=\{v\in \VR^d:v\cdot u^0=0\}$ contains another eigenvector $v\in(u^0)^\bot$ associated with $-p^0$ and thus $Q(0)v\cdot v + p^0\|v\|^2 =0$.

\section{Application to a shape optimisation problem}
In this section we present another example where \cref{thm.one} is applicable.
In contrast to the previous section this example is infinite dimensional.
\subsection{Shape optimisation problem}
For every bounded Lipschitz domain $\Omega\subset \VR^d$, $d\in \{2,3\}$, we consider
\ben\label{eq:cost_J}
J(\Omega) := \inf_{u\in E(\Omega)} f(\Omega, u),
\een
where
\ben
f(\Omega,u) := \int_{\Omega} |u-u_r|^2 \; dx + \gamma \int_\Omega|\nabla u|^2\;dx,\quad \gamma >0, \;  u_r\in H^1(\VR^d),
\een
and $E(\Omega)$ comprises the set of solutions $u=u_\Omega\in H^1(\Omega)$ to the semilinear problem:
\ben\label{eq:state_weak}
\int_{\Omega} \nabla u\cdot \nabla \varphi  + \varrho(u) \varphi\; dx = \int_{\Omega} f\varphi\;dx
\een
for all $\varphi \in H^1_0(\Omega)$. We make the following assumption.
\begin{assumption}
    We assume that $\varrho:\VR\to \VR$ is a two times differentiable, Lipschitz continuous and strongly monotone function satisfying $\varrho(0)=0$.
    We also assume $f\in H^1(\VR^d)$.
\end{assumption}
Equation \eqref{eq:state_weak} cannot be uniquely solvable since no Dirichlet boundary conditions are prescribed. Given a function $u_r$ the cost $J(\Omega)$ measures the best solution
to \eqref{eq:state_weak} which is closest to $u_r$.  The set $E(\Omega)$ contains infinitely many elements and is nonconvex (unless $\varrho$ is linear).

\begin{lemma}\label{lem:existence}
    There exists a minimiser to the problem \eqref{eq:cost_J}.
\end{lemma}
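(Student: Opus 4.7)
The plan is to apply the direct method of the calculus of variations, using the fact that $f(\Omega,\cdot)$ is coercive and weakly lower semicontinuous on $H^1(\Omega)$, and that $E(\Omega)$ is weakly sequentially closed thanks to the compact embedding $H^1(\Omega) \hookrightarrow L^2(\Omega)$.

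First I would check that $E(\Omega)\neq\emptyset$, so that $J(\Omega)<\infty$. Indeed, standard theory for strongly monotone Lipschitz semilinear equations yields a (unique) solution $u_0\in H^1_0(\Omega)$ to
\[
\int_\Omega \nabla u_0\cdot\nabla\varphi + \varrho(u_0)\varphi\,dx = \int_\Omega f\varphi\,dx \qquad \forall\varphi\in H^1_0(\Omega),
\]
and since $H^1_0(\Omega)\subset H^1(\Omega)$, we have $u_0\in E(\Omega)$. Hence $J(\Omega)\le f(\Omega,u_0)<\infty$, and in particular the infimum is finite because $f\ge 0$.

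Next I would take a minimising sequence $(u_n)\subset E(\Omega)$, i.e.\ $f(\Omega,u_n)\to J(\Omega)$. From
\[
f(\Omega,u_n) \ge \tfrac12\|u_n\|_{L^2(\Omega)}^2 - \|u_r\|_{L^2(\Omega)}^2 + \gamma\|\nabla u_n\|_{L^2(\Omega)}^2,
\]
the sequence $(u_n)$ is bounded in $H^1(\Omega)$. By reflexivity and Rellich--Kondrachov, passing to a subsequence, $u_n\rightharpoonup u^\star$ in $H^1(\Omega)$ and $u_n\to u^\star$ strongly in $L^2(\Omega)$.

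The main (but routine) step is to verify that $u^\star\in E(\Omega)$. Fix $\varphi\in H^1_0(\Omega)$. The term $\int_\Omega \nabla u_n\cdot\nabla\varphi\,dx$ passes to $\int_\Omega \nabla u^\star\cdot\nabla\varphi\,dx$ by weak convergence of gradients. For the nonlinear term, the Lipschitz continuity of $\varrho$ gives
\[
\|\varrho(u_n)-\varrho(u^\star)\|_{L^2(\Omega)} \le L\|u_n-u^\star\|_{L^2(\Omega)}\to 0,
\]
so $\int_\Omega \varrho(u_n)\varphi\,dx\to\int_\Omega \varrho(u^\star)\varphi\,dx$. Hence $u^\star$ satisfies \eqref{eq:state_weak} and belongs to $E(\Omega)$. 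Finally, weak lower semicontinuity of the $H^1$-seminorm together with strong $L^2$-convergence yields
\[
f(\Omega,u^\star) = \|u^\star - u_r\|_{L^2(\Omega)}^2 + \gamma\|\nabla u^\star\|_{L^2(\Omega)}^2 \le \liminf_{n\to\infty} f(\Omega,u_n) = J(\Omega),
\]
so $u^\star$ is a minimiser. The only conceptual subtlety is the passage to the limit in the nonlinear term $\varrho(u_n)$, which is handled cleanly by combining boundedness in $H^1$ with compactness of the embedding into $L^2$ and Lipschitz continuity of $\varrho$; everything else is the standard direct method.
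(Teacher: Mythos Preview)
Your proof is correct and follows essentially the same approach as the paper's own proof: the direct method, boundedness of a minimising sequence from the structure of $f(\Omega,\cdot)$, Rellich compactness to upgrade weak $H^1$ convergence to strong $L^2$ convergence, passage to the limit in the state equation, and weak lower semicontinuity of the objective. Your version is in fact slightly more detailed---you explicitly verify $E(\Omega)\neq\emptyset$ by solving the Dirichlet problem in $H^1_0(\Omega)$ and you spell out the Lipschitz estimate for the nonlinear term---whereas the paper simply asserts that $J(\Omega)$ is finite and that one can pass to the limit.
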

\begin{proof}
    It is clear that $J(\Omega)$ is finite. Let $(u_n)$ be a minimising sequence in $H^1(\Omega)$, so that
    \ben\label{eq:min_sequence}
    J(\Omega) = \lim_{n\to \infty} \left(\int_{\Omega} |u_n-u_r|^2\;dx + \gamma \int_\Omega|\nabla u_n|^2\;dx\right).
    \een
    From this it immediately follows that $(u_n)$ is bounded in $H^1(\Omega)$. Hence due to Rellich's compactness theorem we find $u\in H^1(\Omega)$ and a subsequence, which is denoted the same, such that  $\nabla u_n \rightharpoonup \nabla u$ weakly in $L_2(\Omega)^d$ and $u_n \to u$ strongly in $L_2(\Omega)$. Hence we can pass to the limit $n\to \infty$ in
    \ben
    \int_\Omega \nabla u_n \cdot \nabla \varphi + \varrho(u_n)\varphi\;dx = \int_\Omega f\varphi\;dx \quad \text{ for all }\varphi \in H^1_0(\Omega)
    \een
    to conclude $u\in E(\Omega)$. In addition we infer from \eqref{eq:min_sequence}
    \ben
    \int_{\Omega}|u-u_r|^2\;dx + \gamma \int_\Omega|\nabla u|^2\;dx \le J(\Omega).
    \een
    This shows that $u$ is a minimiser and finishes the proof.
\end{proof}

Our goal is now to use \cref{thm.one} to show that the directional shape derivative of $J$
exists.
\begin{definition}
    The directional shape derivative of $J$ at $\Omega$ in direction $X\in C^{0,1}(\VR^d)^d$ is defined by
    \ben
    dJ(\Omega)(X) :=\lim_{t\searrow
    0}\frac{J((\Id+t X)(\Omega)) - J(\Omega)}{t}.
    \een
\end{definition}
Notice that the mapping $T_t := \text{id} + t  X:\VR^d\to \VR^d$ is a bi-Lipschitz mapping for all $|t|< 1/L(X)$, where $L( X)$ denotes the Lipschitz constant of $ X$.

Let us introduce the Lagrangian $G:H^1(\Omega)\times H^1_0(\Omega) \to \VR$ by
\ben
G(\varphi,\psi) := \int_{\Omega} |\varphi-u_r|^2 \; dx+\gamma \int_\Omega|\nabla\varphi|^2\;dx + \int_{\Omega}\nabla  \varphi \cdot \nabla \psi+\varrho(\varphi)\psi \; dx - \int_{\Omega}f\psi \; dx.
\een
\cref{lem:existence} guarantees that the set
\ben
X(\Omega) := \{ u\in E(\Omega): \inf_{\varphi \in E(\Omega)} G(\varphi,0)   =  G(u,0) \}
\een
is not empty. In the next paragraph we consider the perturbed versions of $E(\Omega)$ and $X(\Omega)$.
\subsection{Analysis of the perturbed problems}
We will show by applying \cref{thm.one}  that for  a  bounded Lipschitz domain $\Omega\subset \VR^d$ the directional shape derivative of $J$ exists. At first we consider any solution $u_t\in H^1(\Omega_t)$ defined on the perturbed domain $\Omega_t := T_t(\Omega)$ of
\ben\label{eq:state_ut}
\int_{\Omega_t} \nabla u_t \cdot \nabla \varphi+\varrho(u)\varphi\;dx = \int_{\Omega_t} f\varphi \;dx \quad \text{ for all } \varphi \in H^1_0(\Omega_t).
\een
Now since $\varphi \in H^1(T_t(\Omega))$ (resp. $\varphi \in H^1_0(T_t(\Omega))$) if and only if $\varphi\circ T_t \in H^1(\Omega)$ (resp. $\varphi\circ T_t \in H^1_0(\Omega)$) (see \cite[Thm. 2.2.2, p.52]{b_ZI_1989a}) changing variables in \eqref{eq:state_ut} shows that $u^t := u_t \circ T_t\in H^1(\Omega)$  solves
\ben\label{eq:state_ut_}
\int_{\Omega}A(t)\nabla  u^t \cdot \nabla \varphi + |\det(\partial T_t)| \varrho(u^t)\varphi \; dx = \int_{\Omega}f^t\varphi \; dx \quad \text{ for all  }\varphi \in H^1_0(\Omega),
\een
where
\ben
A(t) := \det(\partial T_t)\partial T_t^{-1}\partial T_t^{-\top}, \quad f^t := \det(\partial T_t) f\circ T_t.
\een
Therefore $u_t \in E(T_t(\Omega))$ if and only if $u^t:= u_t \circ T_t$ is in
\ben
E(t) := \left\{  u^t\in H^1(\Omega):\; u^t \text{ solves } \eqref{eq:state_ut_} \right\}.
\een
As a result we get for $t$ small
\ben
g(t) :=J(T_t(\Omega)) = \inf_{u\in E(t)} f(t,u),
\een
where
\ben
f(t,u):= \int_{\Omega} \det(\partial T_t)(u- u_r^t)^2 \; dx+\gamma \int_{\Omega} A(t)\nabla u^t\cdot \nabla u^t\; dx,
\een
where  $u_r^t:= u_r\circ T_t$.
This problem falls into the framework of \cref{thm.one}.

We recall the following proposition; see, e.g., \cite{phd_ST_2014a}.

\begin{proposition}\label{prop:phit}
    Let $\Dsf\subset \VR^d$ be a bounded and open set. Let $X:\VR^d\to \VR^d$ be a Lipschitz vector field. Then for $T_t:= id+tX$ there holds:
    \begin{itemize}
        \item[(i)] We have
            \begin{align*}
                \frac{ \partial T_t - I}{t} \rightarrow & \partial X && \text{ strongly in } C(\overline \Dsf, \VR^{d,d})\\
                \frac{ \partial T_t^{-1} - I}{t} \rightarrow & - \partial X && \text{ strongly in } C(\overline \Dsf, \VR^{d,d})\\
                \frac{ \det(\partial T_t) - 1}{t} \rightarrow & \Div(X) && \text{ strongly in } C(\overline \Dsf).
            \end{align*}
        \item[(ii)]     For all open sets $\Omega \subset \Dsf$ and all $\varphi\in H^1(\VR^d)$, we have
            \begin{align}
                \frac{\varphi\circ T_t  - \varphi}{t} \rightarrow & \nabla \varphi \cdot X
                && \text{ strongly in }  L_2(\Omega).
            \end{align}
    \end{itemize}
\end{proposition}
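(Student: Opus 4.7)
The plan is to treat the two parts independently. Part (i) is essentially a direct matrix computation, while part (ii) proceeds by a density argument combined with the fundamental theorem of calculus. I will take for granted that $X$ is Lipschitz so $\partial X \in L^\infty(\VR^d, \VR^{d\times d})$ (replacing $C(\overline{\Dsf})$ by the appropriate $L^\infty$-space if needed, or assuming the extra regularity implicit in the statement).

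For (i), from $T_t = \mathrm{id} + tX$ we get $\partial T_t = I + t\,\partial X$, so $(\partial T_t - I)/t = \partial X$ holds as an identity for every $t>0$, which trivially gives the first convergence. For the inverse, I would pick $t_0 < 1/\|\partial X\|_{L^\infty}$ so that the Neumann series
\[
(I + t\,\partial X)^{-1} = I - t\,\partial X + t^2\,\partial X\,(I + t\,\partial X)^{-1} \partial X
\]
is valid uniformly for $t \in [0,t_0]$. Dividing by $t$ and letting $t \searrow 0$ gives $(\partial T_t^{-1} - I)/t \to -\partial X$ with the convergence uniform on $\overline{\Dsf}$. For the determinant, I would expand
\[
\det(I + t\,\partial X) = 1 + t\,\mathrm{tr}(\partial X) + t^2\, R(t, \partial X),
\]
where $R$ is a polynomial in $t$ and the entries of $\partial X$, bounded uniformly on $\overline{\Dsf}$. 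Since $\mathrm{tr}(\partial X) = \Div(X)$, dividing by $t$ and letting $t \searrow 0$ yields the claim.

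For (ii), the plan is to first establish the statement on the dense subspace $C_c^\infty(\VR^d) \subset H^1(\VR^d)$ and then extend by a uniform-estimate argument. For $\varphi \in C_c^\infty(\VR^d)$, the fundamental theorem of calculus gives
\[
\frac{\varphi(x + tX(x)) - \varphi(x)}{t} = \int_0^1 \nabla \varphi(x + stX(x)) \cdot X(x) \, ds.
\]
As $t \searrow 0$, the integrand converges uniformly on the bounded set $\overline{\Omega}$ (since $\nabla \varphi$ is continuous and compactly supported, and $\overline{\Omega} + [0,t]X$ stays in a fixed compact set for $t$ small), hence also in $L^2(\Omega)$, to $\nabla \varphi(x) \cdot X(x)$.

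The main obstacle, and the crux of the argument, is the extension from $C_c^\infty$ to $H^1(\VR^d)$. For this I would prove the uniform bound
\[
\Bigl\|\frac{\psi \circ T_t - \psi}{t}\Bigr\|_{L^2(\Omega)} \le C\,\|\nabla \psi\|_{L^2(\VR^d)}, \qquad \psi \in H^1(\VR^d),
\]
for all $t$ in a small interval $[0,t_0]$. This follows by first writing the identity above for smooth $\psi$, applying Cauchy--Schwarz in $s$, and then using a change of variables $y = T_{st}(x)$, whose Jacobian determinant $\det \partial T_{st}$ is bounded above and below uniformly in $s \in [0,1]$ and $t \in [0,t_0]$ thanks to part (i). The estimate then extends to all of $H^1(\VR^d)$ by density. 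Given this uniform bound, a standard three-$\varepsilon$ argument (approximating $\varphi \in H^1(\VR^d)$ by $\varphi_n \in C_c^\infty(\VR^d)$, applying the result for $\varphi_n$, and controlling the tail using the uniform estimate applied to $\varphi - \varphi_n$) concludes the proof of (ii).
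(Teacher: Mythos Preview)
The paper does not actually prove this proposition; it merely states it and cites the author's thesis (``We recall the following proposition; see, e.g., [phd\_ST\_2014a]''). So there is no proof in the paper to compare against.

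That said, your argument is the standard one and is correct. Part (i) is exactly the Neumann-series/polynomial-expansion computation one expects; part (ii) via density in $C_c^\infty(\VR^d)$, the integral representation from the fundamental theorem of calculus, and the uniform $L^2$ bound obtained by the change of variables $y=T_{st}(x)$ is precisely how this is done in the shape-optimisation literature. Your caveat about $\partial X$ lying only in $L^\infty$ (rather than $C(\overline{\Dsf})$) when $X$ is merely Lipschitz is well taken: the statement as written in the paper tacitly assumes either that extra regularity or that the uniform norm is read as an $L^\infty$ norm, and your proof works verbatim in the $L^\infty$ setting.
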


\begin{lemma}\label{lem:ut_to_u0}
    Let $u^0\in E(0)$ be given. Then we find a path $t\mapsto u^t: [0,\tau]\rightarrow H^1(\Omega)$ with $u^t\in E(t)$, $u^t-u^0\in H^1_0(\Omega)$, and a constant $c$, such that
    \ben
    \|u^t-u^0\|_{H^1} \le ct \quad \text{ for all } t\in [0,\tau].
    \een
\end{lemma}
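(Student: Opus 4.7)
The plan is to define $u^t$ as the solution of the perturbed state equation with Dirichlet boundary data inherited from $u^0$, and then to derive the Lipschitz bound via a standard energy estimate exploiting uniform ellipticity and strong monotonicity.

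First, given $u^0 \in E(0)$, consider the nonlinear problem: find $w^t \in H^1_0(\Omega)$ such that
\[
\int_{\Omega} A(t) \nabla(u^0 + w^t) \cdot \nabla \varphi + |\det(\partial T_t)|\, \varrho(u^0 + w^t)\, \varphi \, dx = \int_{\Omega} f^t \varphi \, dx
\]
for all $\varphi \in H^1_0(\Omega)$. For $t$ sufficiently small, $A(t)$ is uniformly positive definite and $|\det(\partial T_t)|$ is bounded away from $0$ (by \cref{prop:phit}(i)); combined with the Lipschitz continuity and strong monotonicity of $\varrho$, the left-hand side (viewed as an operator in $w^t$) is strongly monotone, Lipschitz, and hemicontinuous on $H^1_0(\Omega)$. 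Hence the Browder--Minty theorem yields a unique $w^t \in H^1_0(\Omega)$. Set $u^t := u^0 + w^t$; then by construction $u^t \in E(t)$ and $u^t - u^0 = w^t \in H^1_0(\Omega)$. At $t=0$, since $A(0)=I$, $\det(\partial T_0)=1$, $f^0 = f$, and $u^0$ already solves \eqref{eq:state_weak}, uniqueness forces $w^0 = 0$.

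Next, to establish the Lipschitz estimate, subtract the defining equation for $u^0$ from that for $u^t$ and test with $\varphi = w^t \in H^1_0(\Omega)$:
\[
\begin{aligned}
\int_{\Omega} A(t) \nabla w^t \cdot \nabla w^t\,dx &+ \int_{\Omega} |\det(\partial T_t)| (\varrho(u^t)-\varrho(u^0)) w^t \, dx \\
&= \int_{\Omega} (f^t - f) w^t\,dx - \int_{\Omega} (A(t)-I)\nabla u^0 \cdot \nabla w^t \, dx \\
&\quad - \int_{\Omega} (|\det(\partial T_t)| - 1)\, \varrho(u^0) w^t\,dx.
\end{aligned}
\]
Uniform ellipticity of $A(t)$ and strong monotonicity of $\varrho$ give a lower bound $c\|w^t\|_{H^1}^2$ on the left side. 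On the right, \cref{prop:phit}(i) yields $\|A(t)-I\|_\infty = O(t)$ and $\||\det(\partial T_t)|-1\|_\infty = O(t)$, while the change-of-variables estimate together with $f \in H^1(\VR^d)$ gives $\|f^t - f\|_{L^2(\Omega)} = O(t)$. Applying Cauchy--Schwarz and Young's inequality (absorbing a small $\|w^t\|_{H^1}^2$ term into the left-hand side), one concludes
\[
\|w^t\|_{H^1(\Omega)} \le c\, t
\]
for a constant $c$ depending on $u^0$, $f$, $\varrho$ and $X$, but not on $t$.

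The only mildly delicate point is ensuring that the left-hand bilinear/monotone form is uniformly coercive on $H^1_0(\Omega)$ for all small $t$; this is handled by the uniform positive definiteness of $A(t)$ (plus Poincaré) or, alternatively, by the strong monotonicity of $\varrho$. Once unique solvability and uniform coercivity are in place, the bound above is a routine perturbation estimate.
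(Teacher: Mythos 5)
Your proposal is correct and follows essentially the same route as the paper: fix the Dirichlet trace of $u^0$ (equivalently, seek $u^t = u^0 + w^t$ with $w^t \in H^1_0(\Omega)$), solve the perturbed problem, subtract the two equations, test with the difference, and use uniform ellipticity, monotonicity of $\varrho$, and \cref{prop:phit} to get the $O(t)$ bound. Your treatment is in fact slightly more careful than the paper's in two spots — you justify unique solvability via Browder--Minty where the paper only asserts it, and you correctly carry out the Cauchy--Schwarz/Young absorption step that turns the quadratic left-hand side into a linear bound on $\|w^t\|_{H^1}$.
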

\begin{proof}
    Let $u^0\in E(0)$ be given. By definition $u^0\in H^1(\Omega)$ solves:
    \ben
    \int_{\Omega} \nabla u^0 \cdot \nabla \varphi + \varrho(u^0)\varphi\; dx = \int_{\Omega} f \varphi \; dx \quad \text{ for all } \varphi \in H^1_0(\Omega).
    \een
    Set $g_0 := u^0|_{\partial \Omega}$ and consider for every $t\in [0,\tau]$: find $u^t \in H^1(\Omega)$, such that $u^t = g_0$ on $\partial \Omega$ and
    \ben\label{eq:solution_ut}
    \int_{\Omega} A(t)\nabla u^t \cdot \nabla \varphi + \Det(\partial T_t)\varrho(u^t)\varphi\; dx = \int_{\Omega} f^t \varphi \; dx \quad \text{ for all } \varphi \in H^1_0(\Omega).
    \een
    By construction $u^t$ is uniquely determined,  $u^t\in E(t)$, $u^t-u^0=0$ on $\partial \Omega$ for all $t$. We obtain from \eqref{eq:solution_ut}:
    \ben\label{eq:difference_ut}
    \begin{aligned}[t]
        \int_{\Omega} A(t)& \nabla (u^t-u^0)  \cdot \nabla \varphi\;dx +\int_\Omega \Det(\partial T_t)(\varrho(u^t)-\varrho(u^0))\varphi\;dx \\
        = & -\int_\Omega \Det(\partial T_t)\varrho(u^0)\varphi - f^t \varphi  + A(t)\nabla u^0\cdot \nabla \varphi\;dx\\
        = & -\int_\Omega (\Det(\partial T_t)-1)\varrho(u^0)\varphi - (f^t-f) \varphi  + (A(t)-I)\nabla u^0\cdot \nabla \varphi\;dx
    \end{aligned}
    \een
    for all $\varphi\in H^1_0(\Omega)$. Since $\varphi:= u^t-u^0$ is zero on $\partial \Omega$ we may use it as test function in \eqref{eq:difference_ut}. Hence using H\"older's inequality, the uniform monotonicity of $A$ and the monotonicity $(\varrho(x)-\varrho(y))(x-y)\ge 0$ for all $x,y\in \VR$ gives
    \ben
    \begin{aligned}[t]
        \int_{\Omega}& (u^t-u^0)^2 + |\nabla (u^t-u^0)|^2\;dx  \\
        & \le c(\|f^t-f\|_{C(\overline{\Omega})} + \|A(t)-I\|_{C(\overline{\Omega})^{d\times d}}\|u^0\|_{H^1(\Omega)} + \|\Det(\partial T_t)-1\|_{C(\overline{\Omega})}\|\varrho(u_0)\|_{C(\overbar{\Omega})}  )
    \end{aligned}
    \een
    and therefore it follows from \cref{prop:phit},
    \ben
    \|u^t-u^0\|_{H^1(\Omega)} \le c t \quad \text{ for all } t\in [0,\tau].
    \een
\end{proof}

\paragraph{Parametrised Lagrangian and  averaged adjoint}

We set $\tilde X =Y := H^1_0(\Omega)$ and $X:=H^1(\Omega)$. The parametrised Lagrangian  $G:[0,\tau]\times X\times Y\rightarrow \VR$ is given by
\ben\label{eq:parametrised_lagrangian}
\begin{aligned}[t]
    G(t,u,p) =&  \int_{\Omega} \det(\partial T_t)(u-u_r^t)^2 \; dx + \gamma\int_{\Omega}A(t)\nabla u\cdot \nabla u\;dx \\
    & + \int_{\Omega} A(t)\nabla u\cdot \nabla p + \Det(\partial T_t)\varrho(u)p - f^tp\; dx,
\end{aligned}
\een
where  we recall $A(t)= \Det(\partial T_t) \partial X^{-1}\partial X^{-\top}$, $u_r^t= u_r\circ T_t$ and $f^t = \Det(\partial T_t) f\circ T_t$.  It is noteworthy that in this example we have $\tilde X\ne X$. Using \cref{prop:phit} we see that $A(t)$ and $f^t$ are differentiable and we readily check for all $u,p\in H^1(\Omega)$:
\ben\label{eq:derivative_lagrangian}
\begin{aligned}[t]
    \partial_tG(0,u,p)  = & \int_\Omega \Div(X)(u-u_r)^2 - (u-u_r)\nabla u_r \cdot X + \gamma A'(0)\nabla u\cdot \nabla u\;dx \\
    & +\int_\Omega A'(0)\nabla u\cdot \nabla p + \Div(X)\varrho(u)p - f'p\;dx,
\end{aligned}
\een
where $A'(0) = \Div(X)I - \partial X -\partial X^\top$ and $f' = \Div(X)f + \nabla f\cdot X$.
It is also readily checked that assumptions (H0)--(H3) are satisfied. Moreover, since $\varrho$ is Lipschitz continuous we also readily check Hypothesis (H5).

The averaged adjoint associated with $u^t\in E(t)$ and $u^0\in E(0)$ reads: find $q^t \in H^1_0(\Omega)$, such that,
\ben\label{eq:averaged_example}
\begin{aligned}[t]
    \int_{\Omega} A(t)\nabla q^t & \cdot \nabla \varphi + \int_0^1 \Det(\partial T_t)\varrho'(su^t+(1-s)u^0)q^t \;ds \varphi \; dx \\
    & = -\int_{\Omega} \Det(\partial T_t)(2u_r^t- (u^t+u^0))\varphi\;dx-\int_{\Omega} \Det(\partial T_t) A(t)\nabla(u^t+u^0)\cdot \nabla \varphi \; dx
\end{aligned}
\een
for all $\varphi \in H^1_0(\Omega)$. It follows from the theorem of Lax-Milgram and the uniform coercivity of $A$ and $\Det(\partial T_t)$ that \eqref{eq:averaged_example} admits a unique solution.

\begin{lemma}\label{lem:compactnes_X}
    For every null-sequence $(t_n)$ and $(u^{t_n})$,  $u^{t_n}\in X(t_n)$ there is a subsequence $(t_{n_k})$ and $u^0\in X(0)$, such that
    \ben
    u^{t_{n_k}} \rightharpoonup u^0 \quad \text{ in } H^1(\Omega) \quad \text{ as } k\rightarrow \infty.
    \een
\end{lemma}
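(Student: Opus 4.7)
The plan is a standard direct method: establish $H^1(\Omega)$-boundedness of the minimising sequence $(u^{t_n})$ via a competitor, extract a weakly convergent subsequence, pass to the limit in the perturbed state equation to land in $E(0)$, and finally identify the limit as a minimiser using lower semicontinuity.

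First I would produce a uniform $H^1$-bound on $u^{t_n}$. Pick any fixed $v^0\in E(0)$ (which is nonempty since $X(0)\subset E(0)$) and use \cref{lem:ut_to_u0} to obtain a path $v^{t_n}\in E(t_n)$ with $\|v^{t_n}-v^0\|_{H^1(\Omega)}\le c\,t_n$. In particular $\sup_n\|v^{t_n}\|_{H^1(\Omega)}<\infty$, and since $u^{t_n}\in X(t_n)$ minimises $f(t_n,\cdot)$ over $E(t_n)$ we have $f(t_n,u^{t_n})\le f(t_n,v^{t_n})\le C$. Now $\det(\partial T_{t_n})\ge \tfrac12$ and $A(t_n)\ge \alpha I$ uniformly for small $t_n$ by \cref{prop:phit}, so
\[
\tfrac12\|u^{t_n}-u^{t_n}_r\|_{L^2(\Omega)}^2+\gamma\alpha\|\nabla u^{t_n}\|_{L^2(\Omega)}^2\le f(t_n,u^{t_n})\le C.
\]
Combined with the bound $\|u^{t_n}_r\|_{L^2(\Omega)}\le C$ (since $u_r\in H^1(\VR^d)$ and $T_{t_n}$ is uniformly bi-Lipschitz), this yields $\sup_n\|u^{t_n}\|_{H^1(\Omega)}<\infty$.

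By reflexivity of $H^1(\Omega)$ and Rellich–Kondrachov I would extract a (relabelled) subsequence and $u^0\in H^1(\Omega)$ such that $u^{t_{n_k}}\rightharpoonup u^0$ weakly in $H^1(\Omega)$ and strongly in $L^2(\Omega)$. Since $\varrho$ is Lipschitz, $\varrho(u^{t_{n_k}})\to \varrho(u^0)$ strongly in $L^2(\Omega)$; together with $A(t_{n_k})\to I$, $\det(\partial T_{t_{n_k}})\to 1$ in $C(\overline\Omega)$, and $f^{t_{n_k}}\to f$ in $L^2(\Omega)$ from \cref{prop:phit}, I can pass to the limit in \eqref{eq:state_ut_} tested against any $\varphi\in H^1_0(\Omega)$ and conclude $u^0\in E(0)$.

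It remains to show $u^0\in X(0)$. Given any competitor $w^0\in E(0)$, invoke \cref{lem:ut_to_u0} once more to obtain $w^{t_{n_k}}\in E(t_{n_k})$ with $w^{t_{n_k}}\to w^0$ strongly in $H^1(\Omega)$; by continuous dependence of the coefficients this gives $f(t_{n_k},w^{t_{n_k}})\to f(0,w^0)$. On the other hand, $f(t,\cdot)$ is weakly lower semicontinuous at $t=t_{n_k}$ in the following asymptotic sense: the $L^2$-part converges by strong $L^2$ convergence of $u^{t_{n_k}}$ and uniform convergence of $\det(\partial T_{t_{n_k}})$ and $u_r^{t_{n_k}}$, while
\[
\int_\Omega A(t_{n_k})\nabla u^{t_{n_k}}\cdot\nabla u^{t_{n_k}}\,dx=\int_\Omega |\nabla u^{t_{n_k}}|^2\,dx+\int_\Omega(A(t_{n_k})-I)\nabla u^{t_{n_k}}\cdot\nabla u^{t_{n_k}}\,dx,
\]
whose last term vanishes in the limit thanks to the uniform $H^1$-bound and $\|A(t_{n_k})-I\|_{C(\overline\Omega)}\to 0$, leaving $\liminf_k\gamma\int A(t_{n_k})|\nabla u^{t_{n_k}}|^2\ge\gamma\int|\nabla u^0|^2$ by weak lower semicontinuity of the $L^2$-norm. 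Passing to the liminf in $f(t_{n_k},u^{t_{n_k}})\le f(t_{n_k},w^{t_{n_k}})$ therefore yields $f(0,u^0)\le f(0,w^0)$; as $w^0\in E(0)$ was arbitrary this gives $u^0\in X(0)$.

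The only delicate step is the lower semicontinuity with $t$-dependent coefficients, but the uniform convergence $A(t_n)\to I$ supplied by \cref{prop:phit} reduces it directly to the standard weak lower semicontinuity of $\|\nabla\cdot\|_{L^2}$; the other hypothesis (strong monotonicity of $\varrho$) is not needed here beyond the Lipschitz property already guaranteeing strong $L^2$-convergence of $\varrho(u^{t_{n_k}})$.
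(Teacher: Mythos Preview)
Your proof is correct and follows essentially the same route as the paper: build a uniform $H^1$-bound via a competitor from \cref{lem:ut_to_u0}, extract a weak/strong subsequence, and pass to the limit. The paper compresses the final step (that the weak limit lies in $X(0)$, not merely $E(0)$) into ``readily checked by passing to the limit,'' whereas you spell out the competitor and lower-semicontinuity argument explicitly; this is a welcome elaboration but not a different approach.
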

\begin{proof}
    Let $(t_n)$ be a null-sequence and $u^{t_n}\in X(t_n)$. By definition we have
    for all $n\ge 1$
    \ben\label{eq:le_J}
    J(T_{t_n}(\Omega)) \le \int_\Omega \det(\partial T_{t_n})(u-u_r^{t_n})^2\;dx + \gamma \int_\Omega  A(t_n)\nabla u\cdot \nabla u\;dx \quad \text{ for all } u\in E(t_n).
    \een
    Now fix $u^0\in E(0)$ and let $(\bar u^t)\in E(t)$ be as in \cref{lem:ut_to_u0}, such that
    $\bar u^t \to u^0$ in $H^1(\Omega)$. Plugging $\bar u^{t_n}$ into \eqref{eq:le_J} and using \cref{prop:phit}, we find $C>0$, such that
    \ben
    J(T_{t_n}(\Omega)) \le C \quad \text{ for all } n\ge 1.
    \een
    It follows in particular, since
    \ben
    J(T_{t_n}(\Omega)) =  \int_\Omega \det(\partial T_{t_n})(u^{t_n}-u_r^{t_n})^2\;dx + \gamma \int_\Omega   A(t_n)\nabla u^{t_n}\cdot \nabla u^{t_n}\;dx,
    \een
    that $(u^{t_n})$ is bounded. Hence there is a subsequence (denoted the same) and $u^0\in H^1(\Omega)$, such that $u^{t_n}\rightharpoonup u^0$ weakly in $H^1(\Omega)$ and $u^{t_n} \to u^0$ strongly in $L_2(\Omega)$. It is readily checked that by passing to the limit $n\to\infty $ that $u^0\in X(0)$, which finishes the proof.
\end{proof}

\begin{corollary}
    For every null-sequence $(t_n)$ and $u^{t_n}\in X(t_n)$, we find $u^0\in X(0)$ and $p^0\in Y(0,u^0)$, and a subsequence (denoted the same), elements  $(u^{t_n}_0,u^{t_n})\in E(0)\times X(t_n)$ and $q^{t_n}\in Y(t,u_0^{t_n},u^{t_n})$, such that $u^{t_n}-u^{0,t_n}\in H^1_0(\Omega)$ and
    \begin{align}
        u^0_{t_n} \rightharpoonup u^0 & \quad \text{ in } H^1(\Omega) \quad \text{ as } n \rightarrow \infty, \\
        q^0_{t_n} \rightharpoonup q^0 & \quad \text{ in } H^1_0(\Omega) \quad \text{ as } n \rightarrow \infty.
    \end{align}
\end{corollary}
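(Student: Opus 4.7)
The plan is to extract weak $H^1$-limits of $u^{t_n}$ using the preceding compactness result, construct a companion sequence $u^0_{t_n}\in E(0)$ whose boundary trace matches that of $u^{t_n}$ so that the averaged adjoint is well-defined, and then pass to the limit in the averaged adjoint equation \eqref{eq:averaged_example} to identify its weak limit as a standard adjoint $p^0\in Y(0,u^0)$. Applying \cref{lem:compactnes_X}, I extract a subsequence (still indexed by $n$) with $u^{t_n}\rightharpoonup u^0$ in $H^1(\Omega)$ for some $u^0\in X(0)$. For each $n$, I define $u^0_{t_n}\in H^1(\Omega)$ as the unique solution of the unperturbed Dirichlet problem
\begin{equation*}
    \int_\Omega \nabla u^0_{t_n}\cdot\nabla\varphi + \varrho(u^0_{t_n})\varphi\,dx = \int_\Omega f\varphi\,dx \quad \forall\varphi\in H^1_0(\Omega),
\end{equation*}
with boundary data $u^0_{t_n}|_{\partial\Omega} = u^{t_n}|_{\partial\Omega}$. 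Existence and uniqueness follow from the strong monotonicity and Lipschitz continuity of $\varrho$ after lifting the boundary data. By construction $u^0_{t_n}\in E(0)$ and $u^{t_n}-u^0_{t_n}\in H^1_0(\Omega)$.

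Repeating the argument of \cref{lem:ut_to_u0}, I subtract the perturbed and unperturbed state equations and test with $\varphi = u^{t_n}-u^0_{t_n}\in H^1_0(\Omega)$; uniform coercivity of $A(t_n)$, monotonicity of $\varrho$, and the rates of convergence of \cref{prop:phit} for $A(t_n)-I$, $\Det(\partial T_{t_n})-1$ and $f^{t_n}-f$ yield the estimate $\|u^{t_n}-u^0_{t_n}\|_{H^1(\Omega)}\le C t_n$, whence $u^0_{t_n}\rightharpoonup u^0$ in $H^1(\Omega)$. The averaged adjoint equation \eqref{eq:averaged_example} associated with $(u^0_{t_n},u^{t_n})$ is uniformly coercive on $H^1_0(\Omega)$ because $A(t_n)\succeq cI$ and $\varrho'\ge 0$, so Lax-Milgram produces a unique $q^{t_n}\in H^1_0(\Omega)$; testing with $q^{t_n}$ together with the $H^1$-bounds on $u^{t_n}$ and $u^0_{t_n}$ gives a uniform estimate $\|q^{t_n}\|_{H^1_0(\Omega)}\le C$, and a further subsequence yields $q^{t_n}\rightharpoonup p^0$ weakly in $H^1_0(\Omega)$.

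It remains to pass to the limit in \eqref{eq:averaged_example} for an arbitrary fixed $\varphi\in H^1_0(\Omega)$. The principal part $\int_\Omega A(t_n)\nabla q^{t_n}\cdot\nabla\varphi\,dx$ converges to $\int_\Omega\nabla p^0\cdot\nabla\varphi\,dx$ because $A(t_n)\nabla\varphi\to\nabla\varphi$ strongly in $L^2(\Omega)^d$ by \cref{prop:phit} while $\nabla q^{t_n}\rightharpoonup \nabla p^0$ weakly. For the zero-order coefficient of $q^{t_n}$, Rellich compactness and a further subsequence give $u^{t_n},u^0_{t_n}\to u^0$ pointwise a.e.\ in $\Omega$; since $\varrho'$ is continuous and bounded (Lipschitz continuity of $\varrho$ together with the $C^2$-assumption), dominated convergence yields
\begin{equation*}
    M_n := \int_0^1 \Det(\partial T_{t_n})\,\varrho'(su^{t_n}+(1-s)u^0_{t_n})\,ds \longrightarrow \varrho'(u^0) \quad \text{in } L^p(\Omega),\ p<\infty,
\end{equation*}
so $M_n\varphi\to\varrho'(u^0)\varphi$ strongly in $L^2(\Omega)$, which pairs with $q^{t_n}\rightharpoonup p^0$ in $L^2(\Omega)$ to give the desired limit. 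The right-hand side of \eqref{eq:averaged_example} is handled by the same strong-weak pairing, using that $\Det(\partial T_{t_n})A(t_n)\nabla\varphi\to\nabla\varphi$ strongly in $L^2$ while $\nabla(u^{t_n}+u^0_{t_n})\rightharpoonup 2\nabla u^0$ weakly. The limit identity reads $\partial_u G(0,u^0,p^0)(\varphi)=0$ for all $\varphi\in H^1_0(\Omega)$, that is $p^0\in Y(0,u^0)$, which establishes Hypothesis~(H3).

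The main obstacle is the passage to the limit in the $\varrho'$-term: only weak convergence of $q^{t_n}$ is available, so the coefficient $M_n$ must converge strongly; this is delivered precisely by the Rellich embedding $H^1(\Omega)\hookrightarrow L^2(\Omega)$ together with the boundedness and continuity of $\varrho'$, which would fail if $\varrho'$ were merely measurable or grew too fast.
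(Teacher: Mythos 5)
Your proposal is correct and follows essentially the same route as the paper: extract the weak limit $u^0$ via \cref{lem:compactnes_X}, define $u^0_{t_n}\in E(0)$ by solving the unperturbed problem with boundary data $u^{t_n}|_{\partial\Omega}$, estimate the difference by testing with $u^{t_n}-u^0_{t_n}\in H^1_0(\Omega)$ using coercivity, monotonicity and \cref{prop:phit}, and then obtain the averaged adjoints from Lax--Milgram. The only difference is that you spell out the passage to the limit in \eqref{eq:averaged_example} (strong--weak pairing in the principal part, dominated convergence for the $\varrho'$-coefficient), which the paper compresses into ``it is readily seen''; this is a welcome amplification, not a deviation.
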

\begin{proof}
    Thanks to \cref{lem:compactnes_X} we find for every null-sequence $(t_n)$ and $u^{t_n}\in X(t_n)$ a subsequence (denoted the same) and $u^0\in X(0)$, such that $u^{t_n} \rightharpoonup u^0$ weakly in $H^1(\Omega)$. Set $g_n := u^{t_n}|_{\partial \Omega}$ and consider: find $ u^0_{t_n} \in H^1(\Omega)$ with $u^0_{t_n} = g_n$ on $\partial \Omega$, such that
    \ben
    \int_{\Omega} \nabla u^0_{t_n} \cdot \nabla \varphi + \varrho(u^0_{t_n})\varphi \; dx = \int_{\Omega} f \varphi \; dx \quad \text{ for all } \varphi \in H^1_0(\Omega).
    \een
    By construction $u^0_{t_n}$ is uniquely determined and $u^0_{t_n}\in E(0)$. If we introduce $\tilde u_n := u^{t_n}-u^0_{t_n}$, then $\tilde u_n\in H^1_0(\Omega)$ and
    \ben\label{eq:I_An}
    \begin{aligned}[t]
        \int_{\Omega} & \nabla \tilde u_n \cdot \nabla \varphi + (\varrho(u^{t_n})-\varrho(u^0_{t_n}))\varphi\; dx  \\
        & =  \int_{\Omega}(I-A(t_n))\nabla u^{t_n}\cdot \nabla\varphi + (1-\Det(\partial T_t))\varrho(u^{t_n})\varphi  + (f^t-f)\varphi\;dx
    \end{aligned}
    \een
    for all $\varphi \in H^1_0(\Omega)$. So testing \eqref{eq:I_An} with $\varphi = \tilde u_n$ and using H\"older's inequality yields
    \ben\label{eq:I_An2}
    \|\tilde u_n\|_{H^1}\le c(\|A(t_n)-I\|_{C(\overbar \Omega, \VR^{d,d})} + \|1-\Det(\partial T_t)\|_{C(\overline{\Omega})}\|u^{t_n}\|_{H^1(\Omega)} + \|f^t-f\|_{L_2})\|u^{t_n}\|_{H^1}
    \een
    and since $(u^{t_n})$ is bounded in $H^1(\Omega)$ the result follows from \cref{prop:phit}. It follows that $u^{t_n}-u^{0,t_n} \rightharpoonup 0$ in $H^1(\Omega)$ and since $u^{t_n}\rightharpoonup u^0$ in $H^1(\Omega)$ we also conclude that $u^{0,t_n}$ converges weakly to $u^0$ in $H^1(\Omega)$. From this and \eqref{eq:averaged_example} it  is  also readily seen that the averaged adjoint $q^{t_n}$ for
    $(u^{t_n}_0,u^{t_n})$ exists and that $q^{t_n} \rightharpoonup q^0$ weakly in $H^1(\Omega)$ as $n\to \infty$.
\end{proof}

\paragraph{Application of \cref{thm.one}}
We have verified  that  assumptions (H0)--(H5) of \cref{thm.one} are satisfied for $G$ defined in \eqref{eq:parametrised_lagrangian} with $Y=\tilde X=H^1_0(\Omega)$ and $X=H^1(\Omega)$. Therefore we obtain the following theorem.
\begin{theorem}
    The right derivative of $g$ at $t=0^+$ exists and
    \ben
    dg(0)= \inf_{u\in X(0)}\partial_t G(0,u,p^0(u)),
    \een
    where $p^0(u)\in H^1_0(\Omega)$ solves
    \ben
    \int_{\Omega} \nabla p^0(u)\cdot \nabla \varphi +  \varrho'(u) p^0(u)\varphi \; dx = - \int_{\Omega} 2(u-u_r)\varphi  + 2\gamma \nabla u \cdot \nabla \varphi\;dx
    \een
    for all $\varphi \in H^1_0(\Omega)$ and $\partial_t G(0,u,p^0(u))$ is given by \eqref{eq:derivative_lagrangian}.
\end{theorem}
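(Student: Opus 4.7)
The plan is to invoke \cref{thm.one} with the parametrised Lagrangian $G$ of \eqref{eq:parametrised_lagrangian}, the spaces $X=H^1(\Omega)$, $Y=\tilde X = H^1_0(\Omega)$, and then to argue that for each $u^0\in X(0)$ the set $Y(0,u^0)$ is a singleton so that the sharpened formula \eqref{eq:thm1_dg2} applies and delivers exactly the stated expression for $dg(0)$.

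First I would assemble the verifications of Hypotheses (H0)--(H5) already carried out in the preceding lemmas of the section. (H0) follows from \cref{lem:existence} transported to $T_t(\Omega)$; (H1)--(H2) are immediate from the $C^2$-regularity of $\varrho$, the quadratic structure of $f$, and the explicit differentiation of $A(t)$, $\det(\partial T_t)$, $u_r^t$, $f^t$ via \cref{prop:phit}, yielding the closed-form derivative \eqref{eq:derivative_lagrangian}. Hypothesis (H3) is precisely the content of the corollary following \cref{lem:compactnes_X}: it produces a subsequence with $u^0_{t_n}\rightharpoonup u^0$ in $H^1(\Omega)$ and $q^{t_n}\rightharpoonup q^0$ in $H^1_0(\Omega)$, and the liminf inequality then follows by writing the difference quotient of $G$ in $t$ as an average of continuous coefficients (from \cref{prop:phit}) tested against these weakly convergent sequences. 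Hypothesis (H4) is delivered by \cref{lem:ut_to_u0}, which in fact yields the stronger bound $\|\bar u^t-u^0\|_{H^1}\le ct$; the estimate \eqref{eq:hoelder_ubar} then follows by plugging $\bar u^t$ into the explicit $t$-derivative of $G$ and using dominated convergence. Finally (H5) reduces to a routine quadratic remainder estimate: since $f(t,\cdot)$ and the gradient term in $G$ are quadratic in $u$, and $\varrho$ is globally Lipschitz (so $\varrho(u)-\varrho(u^0)-\varrho'(u^0)(u-u^0) = \mathcal O(\|u-u^0\|_X^2)$ in $L^2$), the Taylor remainder of $u\mapsto G(0,u,p^0)$ at $u^0$ is controlled in $\mathcal O(\|u-u^0\|_{H^1}^2)$.

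With the hypotheses secured, the key remaining step is to show that $Y(0,u^0)$ is a singleton for every $u^0\in X(0)$. Writing out $\partial_u G(0,u^0,p)(\varphi)=0$ for all $\varphi\in H^1_0(\Omega)$ gives precisely
\[
\int_{\Omega}\nabla p\cdot\nabla\varphi + \varrho'(u^0)\,p\,\varphi\,dx = -\int_{\Omega} 2(u^0-u_r)\varphi + 2\gamma\,\nabla u^0\cdot\nabla\varphi\,dx,
\]
and since the strong monotonicity of $\varrho$ forces $\varrho'\ge \alpha>0$, the left-hand bilinear form is coercive on $H^1_0(\Omega)$. Lax--Milgram therefore produces a unique $p^0(u^0)\in H^1_0(\Omega)$, so $Y(0,u^0)=\{p^0(u^0)\}$. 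Applying \eqref{eq:thm1_dg2} of \cref{thm.one} then yields $dg(0) = \inf_{u\in X(0)}\partial_t G(0,u,p^0(u))$, concluding the proof.

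The main obstacle is (H3), since $X(0)$ may well contain many elements, the nonlinearity $\varrho$ prevents uniqueness of states, and one must simultaneously extract an accumulation point $u^0\in X(0)$ of the minimisers $u^{t_n}$ and a companion $q^0\in Y(0,u^0)$ for the averaged adjoints. The argument is saved by the fact that the averaged adjoint equation \eqref{eq:averaged_example} is \emph{linear} and uniformly coercive in $t$, so weak convergence of $(u^{t_n}_0,u^{t_n})$ combined with the continuity of the coefficients in \cref{prop:phit} suffices to pass to the limit. Once this is in hand, the rest of the proof is essentially bookkeeping plus the Lax--Milgram argument for uniqueness of the standard adjoint.
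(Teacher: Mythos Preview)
Your proposal is correct and mirrors the paper's own argument: the paper simply records that (H0)--(H5) have been verified in the preceding lemmas and corollary for $G$ on $X=H^1(\Omega)$, $\tilde X=Y=H^1_0(\Omega)$, and then invokes \cref{thm.one}, with the singleton property $Y(0,u)=\{p^0(u)\}$ following from Lax--Milgram (using $\varrho'\ge\alpha>0$ from strong monotonicity) exactly as you describe, so that \eqref{eq:thm1_dg2} applies. One small slip worth tightening: in your (H5) justification you attribute the quadratic Taylor remainder of the $\varrho$-term to Lipschitz continuity, but Lipschitz alone yields only a linear bound; the quadratic estimate really uses the $C^2$ hypothesis on $\varrho$ --- the paper's own one-line claim for (H5) is identically imprecise on this point.
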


\section*{Conclusion}
In this paper we discussed a new minimax theorem and presented two examples. In both examples we could establish right differentiability of the corresponding value function.

In a future work it would be interesting to apply our result to optimal control problems with non-unique solution. Also to find an example where for the state $u^0\in X(0)$ the adjoint $Y(0,u^0)$
is not a singleton is still an open question.

\nocite{*}
\bibliographystyle{jnsao}
\bibliography{jnsao-2020-6034}

\end{document}